\DeclareMathAlphabet{\mathpzc}{OT1}{pzc}{m}{it}
\numberwithin{equation}{section}
\theoremstyle{plain}
\newtheorem{lem}{Lemma}[section]
\newtheorem{thm}[lem]{Theorem}
\newtheorem{prop}[lem]{Proposition}
\newtheorem{cor}[lem]{Corollary}
\theoremstyle{definition}
\newtheorem{exa}[lem]{Example}
\newtheorem{rem}[lem]{Remark}
\newtheorem{defn}[lem]{Definition}
\newtheorem{que}[lem]{Question}
\begin{document}

\baselineskip 13truept

\title[$\mathfrak{X}$-elements in  multiplicative lattices ]{$\mathfrak{X}$-elements in  multiplicative lattices - A generalization of $J$-ideals, $n$-ideals and $r$-ideals in rings}

\author{Sachin Sarode* and Vinayak Joshi**}
\address{\rm *Department of Mathematics, Shri Muktanand College\\ Gangapur, Dist. Aurangabad - 431 109, India.} \email{sarodemaths@gmail.com}

\address{\rm **Department of Mathematics, Savitribai Phule Pune University,
	Pune-411 007, India.}
\email{vvjoshi@unipune.ac.in \\
	vinayakjoshi111@yahoo.com }
\subjclass[2020]{Primary 13A15, 13C05, 06F10 Secondary 06A11}

\date{January 17, 2021} \maketitle

\begin{abstract}
	In this paper, we introduce a concept of  $\mathfrak{X}$-element with respect to an $M$-closed set $\mathfrak{X}$ in multiplicative lattices and study properties of $\mathfrak{X}$-elements. For a particular $M$-closed subset $\mathfrak{X}$, we define the concept of $r$-element, $n$-element and $J$-element. These elements generalize the notion  of $r$-ideals, $n$-ideals and $J$-ideals of a commutative ring with unity to multiplicative lattices. In fact, we prove that an ideal $I$ of a commutative ring $R$ with unity is a $n$-ideal ($J$-ideal) of $R$ if and only if it is an $n$-element ($J$-element) of $Id(R)$, the ideal lattice of $R$.

\end{abstract}

\noindent{\bf Keywords:} 
Multiplicative lattice, prime element, $\mathfrak{X}$-element, $n$-element, $J$-element, $r$-element, \linebreak commutative ring, $n$-ideal, $r$-ideal, $J$-ideal.

\section{Introduction}

The ideal theory of commutative rings with unity is very rich. Many researchers defined different ideals ranging from prime ideals, maximal ideals, primary ideals to recently introduced $r$-ideals, $n$-ideals and $J$-ideals. More details about $r$-ideals, $n$-ideals, and $J$-ideals can be found in Mohamadian \cite{M}, Tekir et al. \cite{TKO} and, Khashan and Bani-Ata \cite{KB} respectively.

Ward and Dilworth \cite{wd} introduced the concept of multiplicative lattices to generalize the ideal theory of commutative rings with unity. Analogously, the concepts of a prime element, maximal element, primary elements are defined. 

The study of prime elements and its generalization in a multiplicative lattice is the main focus of many researchers. Different classes of elements and generalization of a prime element in multiplicative lattices were studied; see   Burton \cite{B}, Joshi and Ballal \cite{JB}, Jayaram \cite{J}, Jayaram and Johnson \cite{JJ, JJJ}, Jayaram et al. \cite{JTY}, Manjarekar and Bingi \cite{MB}. 

We observed a unifying pattern in the results of $J$-ideals, $n$-ideals and $r$-ideals of rings.  This motivates us to introduce a new class of elements, namely $\mathfrak{X}$-element in multiplicative lattices. Hence this study will unify many of the results proved for these ideals and generalize it to multiplicative lattice settings. Further, for a particular $M$-closed subset $\mathfrak{X}$, we define the concept of $r$-element, $n$-element and $J$-element. Hence this justifies the name $\mathfrak{X}$-element. These elements are the generalizations of $r$-ideals, $n$-ideals and $J$-ideals of a commutative ring with unity. In fact, we prove that an ideal $I$ of a commutative ring $R$ with unity is a $n$-ideal($J$-ideal) of $R$ if and only if it is an $n$-element ($J$-element) of $Id(R)$, the ideal lattice of $R$.

\pagebreak
Now, we begin with the necessary concepts and terminology.

\begin{defn} \label{1.1.}   A nonempty
	subset $I$ of a lattice $L$ is a \textit{semi-ideal} if $x \leq
	a\in I$ implies $x\in I$. A semi-ideal $I$ of $L$ is an
	\textit{ideal} if $a \vee b \in I$ whenever $a,b\in I$. An ideal
	(semi-ideal) $I$ of a lattice $L$ is a \textit{proper} ideal
	(semi-ideal) of $L$ if $I\neq L$.  A proper ideal (semi-ideal) $I$
	is \textit{prime} if $a\wedge b \in I$ implies $a \in I $ or $b
	\in I$, and it is \textit{minimal} if it does not properly contain
	another prime ideal (prime semi-ideal).  For $a\in L$, let
	$(a]=\{x \in L\colon\, x \leq a\}$. The set $(a]$ is the
	\textit{principal ideal generated by a}.

	A lattice $L$ is \textit{complete} if for any subset $S$ of $L$,
	we have $\bigvee S,\bigwedge S\in L$. The smallest element   and
	the greatest element  of a lattice $L$ is denoted by 0 and 1
	respectively.

	The concept of multiplicative lattices was introduced by Ward and Dilworth \cite{wd} to study the abstract commutative ideal theory of commutative rings.

	A complete lattice $L$ is a \textit{multiplicative
		lattice} if there exists a binary operation $``\cdot"$ called the
	\textit{multiplication} on $L$ satisfying the following
	conditions:
	\begin{enumerate} \item $a \cdot b=b \cdot a$, for all $a,b\in L$. \item
		$a \cdot(b \cdot c)=(a \cdot b)\cdot c$, for all $a,b,c\in L$. \item
		$a \cdot(\bigvee_{\alpha} b_{\alpha})=\bigvee_{\alpha}(a \cdot
		b_{\alpha})$, for all $a,b_{\alpha}\in L$, $\alpha \in \Lambda$(an
		index set).  \item $a \cdot 1=a$, for all $a\in L$.
	\end{enumerate}
	
	Note that in a multiplicative lattice $L$, $a\cdot b \leq a \wedge
	b$ for $a,b \in L$. For this, let $a=a\cdot 1= a \cdot (b \vee 1)=
	a\cdot b \vee a$. Thus $a\cdot b \leq a$. Similarly, $a\cdot b
	\leq b$. This proves that $a\cdot b \leq a\wedge b$. Moreover, if
	$a \leq b$ in $L$, then $a\cdot c \leq b \cdot c$ for every $c \in
	L$. Also, if $a \leq b$ and $c\leq d$ then $a \cdot c \leq b
	\cdot d$.
	
	An element $ c $ of a complete lattice $ L $ is \textit{compact}
	if $ c \leq \bigvee_{\alpha} a_{\alpha} $, $\alpha \in \Lambda$
	($\Lambda$ is an index set) implies $ c \leq \bigvee _{i=1}^{n}
	a_{\alpha_{i}} $, where $ n \in \mathbb{Z}^{+} $. 	The set of all compact elements of a lattice $ L $ is denoted by
	$L_{*} $.
	
	A lattice $L$ is \textit{compactly generated} or
	\textit{algebraic} if for every $ x \in L $, there exist $
	x_{\alpha} \in L_{*} $ for $\alpha \in \Lambda$(an index set)
	such that $ x = \bigvee_{\alpha} x_{\alpha} $, that is, every
	element is a join of compact elements. Equivalently, if $L$ is a
	compactly generated lattice and if $a\not\leq b$ for $a, b \in L$,
	then there exists a nonzero compact element $c\in L_*$ such that
	$c \leq a$ and $c \not\leq b$.
	
	A multiplicative lattice $L$ is \textit{$1$-compact} if $1$ is a compact element of $L$.
	A multiplicative lattice $L$ is \textit{compact} if every element of $L$
	is a compact element.
	
	A multiplicative lattice $L$ is a {\it $c$-lattice} if $L$ is
	1-compact, compactly generated multiplicative lattice in which the product of two compact elements is compact. Note that the ideal lattice of a commutative ring $R$ with unity is always a $c$-lattice.
	
	An element $p$ of a multiplicative lattice $L$ with $p \neq 1$ is \textit{prime} if $a
	\cdot b \leq p$ implies $a \leq p$ or $b \leq p$. It is not
	difficult to prove that an element $p$ (with $p \neq 1$) of a
	$c$-lattice $L$ is \textit{prime} if $a \cdot b \leq p$ for $a, b
	\in L_{*}$ implies $a \leq p$ or $b \leq p$. An element $p$ is
	said to be a \textit{minimal prime element} if there is no prime
	element $q$ such that $q <p$. An ideal $P$ of a commutative ring $R$ with unity is prime if and only if it is  a prime element of $Id(R)$, the ideal lattice of $R$.
	
	Let $L$ be a $c$-lattice and $a \in L$. Then the  \textit{radical}   of $a$ is denoted by $\sqrt{a}$ and given by \linebreak
	$\sqrt{a}= \bigvee \{x \in L_{*} \;|\; \ x^{n} \leq a$ for some $n
	\in \mathbb{N} \}$. Note that if any compact element $c \leq \sqrt{a}$, then $c^{m} \leq a$ for some $ m \in \mathbb{N} $. An element $a$
	of a $c$-lattice is a \textit{radical element},  if $a = \sqrt{a}$.
	A $c$-lattice is called a \textit{domain} if $0$ is a prime element of $L$.

	A proper element $i$ of a $c$-lattice $L$ is called \textit{ primary element} if whenever $a \cdot b  \leq i$ for some $a, b \in L$ then either $a \leq i$ or $b \leq \sqrt{i}$.
	

	A proper element $m$ of a multiplicative lattice is said to be \textit{maximal}, if $m \leq n <1$, then $m=n$. The set of all maximal elements of $L$ is denoted by Max$(L)$.
	The \textit{Jacobson radical} of $L$ is the set $J(L) = \displaystyle \bigwedge\{m\ | \ m \in \text{Max}(L)\}$.
	It is easy to observe that a maximal element of a $c$-lattice $L$ is prime. 
	A $c$-lattice $L$ is said to be \textit{local}, if $L$ has the unique maximal element $m$. In this case, we write $(L; m)$.
	
	A non-empty subset $\mathfrak{X}$ of $L_{*}$(set of all compact elements) in a $c$-lattice $L$ is \textit{multiplicatively closed} if
	$s_{1} \cdot s_{2} \in \mathfrak{X}$, whenever $s_{1}, s_{2} \in \mathfrak{X}$.
	
	A non-empty subset $\mathfrak{X}$ of a multiplicative lattice $L$ is  called \textit{$M$-closed} if $a, b \in \mathfrak{X}$, then  $a \cdot b \in \mathfrak{X}$.
	
	From the definitions, it is clear that every $M$-closed subset $A$ of a $c$-lattice is a multiplicatively closed subset of $L$.  The converse is not true. However, if $L$ is a compact lattice or finite, then $L=L_*$ and hence both definitions coincide with each other. Further, if $p$ is a prime element of a $c$-lattice $L$, then $L \setminus (p]$ is an $M$-closed subset of $L$.

	In a multiplicative lattice $L$, an element $a \in L$  is
	\textit{nilpotent} if $a^{n}= 0$ for some $ n \in
	\mathbb{Z}^{+}$, and $L$ is \textit{reduced} if the only nilpotent
	element is 0. The set of all nilpotent elements of a
	multiplicative lattice $L$ is denoted by  Nil$(L)$. 
	
	We denote the set $Z(L)$ of zero-divisors in $L$ by the set  $Z(L)=\bigl\{x\in L\ | \ x\cdot y=0 \text{ for some } y \in L\setminus\{0\}\bigr\}$. Clearly, Nil$(L) \subseteq Z(L)$.

	
	Let $L$ be a multiplicative lattice and $a,b \in L$. Then $(a:b) =
	\bigvee \{x \;|\;  x\cdot b \leq a\}$. Note that $x \cdot b \leq a
	\Leftrightarrow x \leq (a:b)$. Clearly, $a \leq (a:b)$  and
	$(a:b)\cdot b \leq a$ for  $a, b \in L$. 
	If $a\in L$, then $ann_L (a) =\bigvee\{x \in L \;|\: a \cdot x=0\}$. 
\end{defn}


For undefined concepts in lattices, see Gr\"atzer
\cite{G}.

\section{$\mathfrak{X}$-Elements in  multiplicative Lattices}
We introduce the concept of an $\mathfrak{X}$-element in multiplicative lattices.

\begin{defn}\label{4.1.} Let $L$ be a multiplicative lattice and $\mathfrak{X}$ be an $M$-closed subset of $L$. A proper element $i$ of a multiplicative lattice $L$ is called an {\it $\mathfrak{X}$-element}, if  $a \cdot b ~\leq i$ with $a ~\notin \mathfrak{X}$  implies $b~ \leq i$ for  $a, ~b ~\in L$.  \end{defn}

\vspace{-.2in}
\begin{minipage}{0.4\linewidth}
	\begin{exa} \label{4.2.}
		Consider a lattice $K$ whose Hasse diagram is shown in Figure
		\ref{40.1.}. On $K$, define the trivial multiplication $x \cdot y = 0 = y \cdot
		x$ for every $x, ~y \not\in \{1\} $ and $x \cdot 1 = x = 1 \cdot x$ for
		every $x \in K$. It is easy to see that $K$ is a multiplicative
		lattice. Moreover, $K$ is non-reduced. If we take $\mathfrak{X} = \{0,~ a,~ b, ~c, ~d\}$, then every proper element of $K$ is an $\mathfrak{X}$-element of $K$.
	\end{exa}
\end{minipage}
\begin{minipage}{0.6\linewidth}
	\begin{center}
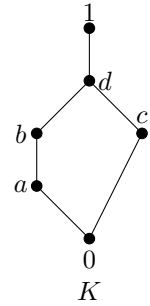

		\vspace{.3in}
		\begin{tikzpicture}[scale=.7]
			\draw (5,0) -- (4,1) node at (5, -0.4) {$0$}; \draw [fill=black] (5,0) circle
			(.1); \draw (5,0) -- (6,2) node at (6, 2.3) {$c$}; \draw [fill=black] (6,2)
			circle (.1); \draw (4,1) -- (4,2) node at (3.7, 1) {$a$}; \draw [fill=black]
			(4,1) circle (.1); \draw (4,2) -- (5,3) node at (3.7, 2) {$b$};
			\draw [fill=black] (4,2) circle (.1); \draw (6,2) -- (5,3) node at (5.3, 3)
			{$d$}; \draw [fill=black] (5,3) circle (.1); \draw (5,3) -- (5,4) node at
			(5,4.3) {$1$}; \draw [fill=black] (5,4) circle (.1); \draw node at (5,-1)
			{$K$};
		\end{tikzpicture}
		\captionof{figure}{A multiplicative lattice  in which every proper element is an $\mathfrak{X}$-element}\label{40.1.}
	\end{center}
\end{minipage}

\begin{rem} Note that a proper element of a multiplicative lattice  $L$ is an  $\mathfrak{X}$-element or not,  depends on an $M$-closed subset $\mathfrak{X}$ under consideration. 
	If $x$ is an $\mathfrak{X}_1$-element with respect to an $M$-closed subset $\mathfrak{X}_1$, then $x$ may or may not be  an $\mathfrak{X}_2$-element with respect to an $M$-closed subset $\mathfrak{X}_2$ different from $\mathfrak{X}_1$. 
	
	Also, note that if $L$ is a multiplicative lattice and $\mathfrak{X} = \{ 1 \}$ is an $M$-closed subset of $L$, then $L$ does not contain an $\mathfrak{X}$-element.    \end{rem}

\begin{lem}\label{4.10.} Let $L$ be a multiplicative lattice and $\mathfrak{X}$ be an $M$-closed subset of $L$. If $i$ is an $\mathfrak{X}$-element of $L$, then $(i] ~\subseteq \mathfrak{X}$. In particular, if $(i] = \mathfrak{X}$, then $i$ is an $\mathfrak{X}$-element of $L$ if and only if $i$ is a prime element of $L$. 
\end{lem}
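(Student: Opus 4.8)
The plan is to exploit the identity $x\cdot 1 = x$ coming from axiom (4) of a multiplicative lattice for the first assertion, and then to observe that the hypothesis $(i]=\mathfrak{X}$ makes the defining condition of an $\mathfrak{X}$-element coincide verbatim with the defining condition of a prime element.

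For the inclusion $(i]\subseteq\mathfrak{X}$, I would take an arbitrary $x\in (i]$, so that $x\leq i$, and argue by contradiction assuming $x\notin\mathfrak{X}$. Writing $x = x\cdot 1$, I obtain $x\cdot 1\leq i$ with $x\notin\mathfrak{X}$; applying Definition \ref{4.1.} with $a=x$ and $b=1$ then yields $1\leq i$, which contradicts the properness of $i$ (every $\mathfrak{X}$-element is proper by definition). Hence $x\in\mathfrak{X}$, and $(i]\subseteq\mathfrak{X}$ follows.

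For the ``in particular'' statement, I would first record the key translation afforded by the hypothesis $(i]=\mathfrak{X}$: for any $a\in L$, the condition $a\notin\mathfrak{X}$ is equivalent to $a\not\leq i$. Granting this, the equivalence is immediate in both directions. If $i$ is prime and $a\cdot b\leq i$ with $a\notin\mathfrak{X}$, then $a\not\leq i$, so primeness forces $b\leq i$; thus $i$ is an $\mathfrak{X}$-element, where properness is assured since $i\neq 1$ for a prime element. Conversely, if $i$ is an $\mathfrak{X}$-element and $a\cdot b\leq i$ with $a\not\leq i$, then $a\notin\mathfrak{X}$, so the $\mathfrak{X}$-element property gives $b\leq i$; as $i$ is proper, this shows $i$ is prime.

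There is no substantial obstacle here; the only points needing care are the choice $b=1$ in the first part, which is precisely what forces the inclusion, and the bookkeeping of properness (ensuring $i\neq 1$) so that the prime and $\mathfrak{X}$-element notions remain well posed. The entire argument rests on the two elementary facts that $x\cdot 1 = x$ and that, under $(i]=\mathfrak{X}$, membership in $\mathfrak{X}$ coincides with lying below $i$.
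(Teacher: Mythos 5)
Your proposal is correct and follows essentially the same route as the paper: the inclusion $(i]\subseteq\mathfrak{X}$ is obtained by applying the $\mathfrak{X}$-element condition to $x\cdot 1 = x\leq i$ and deriving the contradiction $1\leq i$, and the ``in particular'' part is the direct translation $a\notin\mathfrak{X}\Leftrightarrow a\nleq i$ under the hypothesis $(i]=\mathfrak{X}$. No gaps; your explicit attention to properness is a small bonus the paper leaves implicit.
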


\begin{proof} Suppose $i$ is an $\mathfrak{X}$-element of a multiplicative lattice $L$ and  let $x \in (i] $. Suppose on the contrary that $x \notin \mathfrak{X}$. Clearly, $ x \cdot 1 \leq i$ with $x \notin \mathfrak X $. Since $i$ is an $\mathfrak{X}$-element, we get $1 \leq i$, a contradiction to the fact that $i$ is a proper element of $L$. Therefore  $x \in \mathfrak{X}$ and hence $(i] ~\subseteq \mathfrak{X}$.

	Now, we prove ``in particular" part. Suppose that $(i] = \mathfrak{X}$ and $i$ is an $\mathfrak{X}$-element of $L$. Let $a, b \in L$ such that $a \cdot b \leq i$ and $a \nleq i$, i.e., $a \notin \mathfrak{X}$. As $i$ is an $\mathfrak{X}$-element, $b \leq i$. So $i$ is a prime element of $L$. 
	
	Conversely, suppose that $(i] = \mathfrak{X}$ and $i$ is a prime element of $L$. Let $a, b \in L$ such that  $a \cdot b \leq i$ with $a \not \in \mathfrak{X} = (i]$. By primeness of $i$, $b 
	\leq i$. Thus $i$ is an $\mathfrak{X}$-element of $L$. 
\end{proof}

\begin{rem}\label{4.11.}
	The converse of the Lemma \ref{4.10.} need not be true in general, i.e., if $i$ is a proper element of a multiplicative lattice $L$ such that $i \in \mathfrak{X}$, then $i$ need not be an $\mathfrak{X}$-element of $L$. Consider the ideal lattice $L$ of the ring $\mathbb{Z}_{12}$. Clearly, $L$ is a non-reduced lattice. Put $\mathfrak{X}= \{(0), ~(6)\}$. Then $(0),  (6) \in \mathfrak{X}$ but $(0)$ and $(6)$ are not  $\mathfrak{X}$-elements of $L$.  
\end{rem}

\begin{lem}\label{imply}
	Let $L$ be a multiplicative lattice and $\mathfrak{X}$ and $\mathfrak{X'}$  be $M$-closed subsets of $L$ such that $\mathfrak{X} \subseteq \mathfrak{X'}$. If $i$ is an $\mathfrak{X}$-element of $L$, then $i$ is an $\mathfrak{X'}$-element of $L$.
\end{lem}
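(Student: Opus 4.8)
The plan is to verify the defining property of an $\mathfrak{X'}$-element directly from Definition \ref{4.1.}, using the hypothesis that $i$ is already an $\mathfrak{X}$-element together with the set inclusion $\mathfrak{X} \subseteq \mathfrak{X'}$. First I would record that $i$ is a proper element of $L$, since being an $\mathfrak{X}$-element already requires this, so the properness clause of the $\mathfrak{X'}$-element definition is inherited for free.

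The substance of the argument is a single contrapositive observation on complements. To show $i$ is an $\mathfrak{X'}$-element, I would take arbitrary $a, b \in L$ with $a \cdot b \leq i$ and $a \notin \mathfrak{X'}$, and aim to conclude $b \leq i$. The key step is that $\mathfrak{X} \subseteq \mathfrak{X'}$ gives the reverse inclusion of complements, namely $L \setminus \mathfrak{X'} \subseteq L \setminus \mathfrak{X}$; hence $a \notin \mathfrak{X'}$ forces $a \notin \mathfrak{X}$. At this point the hypothesis that $i$ is an $\mathfrak{X}$-element applies verbatim to the pair $a, b$: from $a \cdot b \leq i$ with $a \notin \mathfrak{X}$ we obtain $b \leq i$, which is exactly what the $\mathfrak{X'}$-element condition demands.

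Since $a$ and $b$ were arbitrary, this completes the verification that $i$ is an $\mathfrak{X'}$-element. Honestly, there is no real obstacle here: the only ingredient beyond unwinding the definition is the elementary fact that inclusion of $M$-closed sets reverses under complementation, and the multiplicative-lattice structure plays no further role. I would therefore keep the write-up to a few lines, emphasizing the complement inclusion as the one place where the hypothesis $\mathfrak{X} \subseteq \mathfrak{X'}$ is actually used.
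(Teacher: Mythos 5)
Your proof is correct and is exactly the argument the paper leaves implicit in its one-line proof (``follows from the definition''): the inclusion $\mathfrak{X} \subseteq \mathfrak{X'}$ reverses on complements, so $a \notin \mathfrak{X'}$ forces $a \notin \mathfrak{X}$ and the $\mathfrak{X}$-element condition applies directly. No issues.
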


\begin{proof}
	follows from the definition of an $\mathfrak{X}$-element.
\end{proof}

\begin{lem}\label{local}
	Let $(L;m)$ be a local lattice. Then every proper element of $L$ is an $\mathfrak{X}$-element for $\mathfrak{X}=(m]$. 
\end{lem}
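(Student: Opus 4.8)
The plan is to reduce the $\mathfrak{X}$-element condition to a single structural fact: in a local $c$-lattice, every proper element lies below the unique maximal element $m$. Before anything else I would record that $\mathfrak{X}=(m]$ really is an $M$-closed subset, so that Definition~\ref{4.1.} even applies here. This is immediate: if $a,b\in(m]$, then since $a\cdot b\le a\wedge b\le a\le m$ in any multiplicative lattice, we get $a\cdot b\le m$, i.e.\ $a\cdot b\in(m]$. So no hypothesis beyond the general inequality $a\cdot b\le a$ is needed for this part.

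Next I would fix an arbitrary proper element $i$ of $L$ and verify the defining implication of an $\mathfrak{X}$-element directly. Suppose $a\cdot b\le i$ with $a\notin\mathfrak{X}=(m]$, which means precisely $a\not\le m$. The key claim is that every proper element of $L$ is $\le m$; granting this, the condition $a\not\le m$ forces $a$ to be non-proper, that is $a=1$. Then $b=1\cdot b=a\cdot b\le i$, which is exactly the conclusion required by Definition~\ref{4.1.}. Hence $i$ is an $\mathfrak{X}$-element, and since $i$ was an arbitrary proper element, the lemma follows.

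The one genuinely non-formal step — and the place I expect the real work to sit — is the claim that every proper element of a local $c$-lattice is below $m$, equivalently that the only element not lying in $(m]$ is $1$ itself. I would derive this from $1$-compactness of $L$ (part of being a $c$-lattice) via a Zorn's lemma argument: given $a<1$, the family $\{x\in L:\ a\le x<1\}$ is nonempty and closed under joins of chains, because compactness of $1$ prevents the join of a chain of proper elements from reaching $1$ (if it did, $1$ would be dominated by finitely many, hence by a single member of the chain). A maximal element of this family is then a maximal element of $L$ lying above $a$, and uniqueness of the maximal element in a local lattice identifies it with $m$, giving $a\le m$. Once this observation is secured, everything else in the proof is a mechanical unwinding of the definition of an $\mathfrak{X}$-element.
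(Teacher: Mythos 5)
Your proof is correct and follows essentially the same route as the paper's: the whole argument reduces to the observation that $a\notin(m]$ forces $a=1$ in a local lattice, whence $b=a\cdot b\le i$. The paper states this in two lines, taking for granted the facts you carefully justify (that $(m]$ is $M$-closed and that $1$-compactness places every proper element below the unique maximal element), so your version is simply a more detailed rendering of the same proof.
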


\begin{proof}
	Let $a\cdot b \leq i$ and $a \notin \mathfrak{X}=(m]$. Since $L$ is local, $a=1$. Hence in this case $b \leq i$. This proves that $i$ is an $\mathfrak{X}$-element.  
\end{proof}

\begin{lem}\label{meet-irr}
	Assume that every proper element of a $c$-lattice $L$ is an $\mathfrak{X}$-element, where $\mathfrak{X}=(m]$ and $m \in L\setminus \{1\}$. Then $m$ is a unique maximal element of $L$. 
\end{lem}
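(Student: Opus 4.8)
The plan is to show first that the hypothesis forces every proper element of $L$ to lie below $m$, and then to read off both maximality and uniqueness of $m$ from this single fact. The engine for the reduction is Lemma \ref{4.10.}, so before anything else I would note that $\mathfrak{X}=(m]$ really is an $M$-closed subset: if $a,b \leq m$ then $a\cdot b \leq a \wedge b \leq m$, so $(m]$ is closed under the multiplication and Lemma \ref{4.10.} is applicable with this choice of $\mathfrak{X}$.

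Next I would apply Lemma \ref{4.10.} to an arbitrary proper element $i$. By hypothesis every proper $i$ is an $\mathfrak{X}$-element, so the lemma gives $(i] \subseteq \mathfrak{X} = (m]$; taking $x=i$ in $(i]$ yields $i \leq m$. Thus every proper element of $L$ is dominated by $m$, that is, $m$ is the greatest element of $L \setminus \{1\}$. Note that $m$ itself is proper since $m \neq 1$, so this statement is non-vacuous.

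Finally I would deduce the two conclusions directly. For maximality of $m$: if $m \leq n < 1$, then $n$ is proper, hence $n \leq m$ by the previous step, so $n = m$. For uniqueness: if $m'$ is any maximal element, then $m'$ is proper, so $m' \leq m < 1$, and the maximality of $m'$ applied to the comparison $m' \leq m$ forces $m' = m$. Hence $m$ is the unique maximal element. The only point requiring any thought, and it is slight, is the recognition that the blanket hypothesis, filtered through Lemma \ref{4.10.}, pins the entire proper part of $L$ beneath $m$; once that is in hand, maximality and uniqueness are immediate, and none of the $c$-lattice machinery (compactness, compact generation) is actually needed for the argument.
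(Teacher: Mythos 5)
Your proof is correct and follows essentially the same route as the paper: apply Lemma \ref{4.10.} to conclude that every proper element lies below $m$, and then read off uniqueness of the maximal element. Your version is in fact slightly more careful than the paper's, since you check that $(m]$ is $M$-closed and verify directly that $m$ itself is maximal rather than only that every maximal element is dominated by $m$.
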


\begin{proof}
	Let $i$ be a proper element of $L$ which is an $\mathfrak{X}$-element. Then by Lemma \ref{4.10.}, $i \leq m$. This is true for all proper elements $i$ of $L$. In particular, it is true for all maximal elements $m'$ too. This proves that $L$ has unique maximal element. Therefore it is a meet-irreducible element of $L$.
	\end{proof}

\begin{lem}\label{4.12.} Let $L$ be a multiplicative lattice and $\mathfrak{X}$ be an $M$-closed subset of $L$. If $\{i_j\}$, where $j \in \Lambda$ (an index set), is a non-empty set of  $\mathfrak{X}$-elements of $L$, then  $\bigwedge_j  i_j$ is also an $\mathfrak{X}$-element.  \end{lem}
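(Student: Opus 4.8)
The plan is to verify the two parts of the definition of an $\mathfrak{X}$-element directly for the element $i := \bigwedge_{j} i_j$, exploiting the fact that a meet lies below each of its components and that the defining implication can be applied termwise.

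First I would check that $i$ is a \emph{proper} element, which is required before it can qualify as an $\mathfrak{X}$-element at all. Since the index set $\Lambda$ is non-empty, I fix any $j_0 \in \Lambda$. Because $i_{j_0}$ is an $\mathfrak{X}$-element it is proper, so $i_{j_0} < 1$, and then $i = \bigwedge_j i_j \leq i_{j_0} < 1$, whence $i \neq 1$.

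Next comes the core step, namely checking the defining implication. Suppose $a, b \in L$ satisfy $a \cdot b \leq i$ with $a \notin \mathfrak{X}$; I must deduce $b \leq i$. From $a \cdot b \leq \bigwedge_j i_j$ I get $a \cdot b \leq i_j$ for every $j \in \Lambda$. Fixing $j$ and applying the hypothesis that $i_j$ is an $\mathfrak{X}$-element (together with $a \notin \mathfrak{X}$) yields $b \leq i_j$. Since this holds for all $j$, taking the meet gives $b \leq \bigwedge_j i_j = i$, as desired.

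I expect no genuine obstacle here: the argument is a direct unwinding of the definition, and the two universal quantifiers over $j$ (one packaged into $a\cdot b \leq \bigwedge_j i_j \Leftrightarrow a\cdot b \leq i_j\ \forall j$, the other reassembled from $b \leq i_j\ \forall j$ into $b \leq \bigwedge_j i_j$) simply pass through the meet. The only point deserving attention is the properness of $i$, which genuinely uses the hypothesis that $\{i_j\}$ is \emph{non-empty}; without that the empty meet would equal $1$ and the statement would fail.
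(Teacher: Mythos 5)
Your proof is correct; the paper itself dismisses this lemma with ``Obvious,'' and your direct unwinding of the definition (properness from non-emptiness of $\Lambda$, then passing the defining implication through the meet termwise) is exactly the intended argument.
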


\begin{proof}
	Obvious.\end{proof}

\begin{rem}\label{10.1.}
	The join of two $\mathfrak{X}$-elements is not necessarily an $\mathfrak{X}$-element. Consider the ideal lattice $L$ of $\mathbb{Z}_{15}$ with $\mathfrak X=\{(0), (3), (5)\}$. Then $(3),  (5)$ are $\mathfrak{X}$-elements of $L$, but $(3) \vee (5) = (1)$ is not an\mbox{$\mathfrak{X}$-element}. \end{rem}

\begin{lem}\label{4.13.}  Let $i$ be a proper element of a $c$-lattice $L$ and $\mathfrak{X}$ be an $M$-closed subset of $L$. Then $i$ is an $\mathfrak{X}$-element of $L$ if and only if $i = (i : a)$ for all $a \notin \mathfrak{X}$. In particular, if $i$ is an $\mathfrak{X}$-element of $L$, then $(i : a)$ is an $\mathfrak{X}$-element of $L$ for all $a \notin \mathfrak{X}$.\end{lem}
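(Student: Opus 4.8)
The plan is to prove the biconditional by exploiting the residuation adjunction recorded in Definition~\ref{1.1.}, namely that $x \cdot b \leq a$ if and only if $x \leq (a:b)$, together with its two consequences $a \leq (a:b)$ and $(a:b)\cdot b \leq a$. Both directions are then essentially a translation between the defining inequality of an $\mathfrak{X}$-element and this adjunction, so the argument should be short; no special feature of $c$-lattices beyond commutativity of the multiplication and these residuation facts will actually be needed.

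For the forward implication, I would assume $i$ is an $\mathfrak{X}$-element and fix $a \notin \mathfrak{X}$. One inequality, $i \leq (i:a)$, is free from the general fact $a \leq (a:b)$ applied with the roles $a \mapsto i$ and $b \mapsto a$. For the reverse inequality I would start from $(i:a)\cdot a \leq i$, which holds by definition of the residual, and use commutativity to rewrite it as $a \cdot (i:a) \leq i$. Since $a \notin \mathfrak{X}$, the defining property of an $\mathfrak{X}$-element (applied with the second factor $b = (i:a)$) forces $(i:a) \leq i$. Combining the two inequalities gives $i = (i:a)$, as required.

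For the converse, I would assume $i = (i:a)$ for every $a \notin \mathfrak{X}$, and suppose $b \cdot c \leq i$ with $b \notin \mathfrak{X}$; the goal is $c \leq i$. Rewriting $c \cdot b \leq i$ through the adjunction gives $c \leq (i:b)$, and since $b \notin \mathfrak{X}$ the hypothesis yields $(i:b) = i$, whence $c \leq i$. Thus $i$ is an $\mathfrak{X}$-element. The \emph{in particular} clause is then immediate: if $i$ is an $\mathfrak{X}$-element, the biconditional just proved gives $(i:a) = i$ for each $a \notin \mathfrak{X}$, so $(i:a)$ coincides with the $\mathfrak{X}$-element $i$ itself (and is in particular proper), hence is an $\mathfrak{X}$-element.

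The only point requiring care---the sole candidate for an obstacle---is the bookkeeping of the order of arguments in the residual and the single appeal to commutativity that lets the defining inequality $(i:a)\cdot a \leq i$ be read as ``$a$ times something lies below $i$''; once the factor outside $\mathfrak{X}$ is correctly identified as $a$, the $\mathfrak{X}$-element hypothesis applies verbatim. Everything else is a direct unwinding of the definitions.
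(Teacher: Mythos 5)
Your proof is correct, and the converse direction coincides with the paper's. The forward direction, however, is genuinely different: the paper takes an arbitrary compact element $x \leq (i:a)$, deduces $x \cdot a \leq i$, applies the $\mathfrak{X}$-element property to get $x \leq i$, and then invokes the fact that $L$ is compactly generated to conclude $(i:a) \leq i$. You instead apply the $\mathfrak{X}$-element condition once, directly to the pair $a$ and $(i:a)$, using the residuation inequality $(i:a)\cdot a \leq i$ (which holds in any multiplicative lattice because multiplication distributes over arbitrary joins). Since Definition~\ref{4.1.} quantifies over all of $L$ and not merely over compact elements, your single application is legitimate, and it buys something: your argument never uses that $L$ is a $c$-lattice, so it establishes the equivalence for an arbitrary multiplicative lattice. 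The paper's compact-element detour would only be forced if one worked with the restricted ``compact-only'' formulation of the $\mathfrak{X}$-element condition (as in Lemma~\ref{2.8.}); for the definition as actually stated, your route is the shorter and more general one. The treatment of the ``in particular'' clause is the same in both.
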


\begin{proof}
	Suppose $i$ is an $\mathfrak{X}$-element of $L$ and let  $a \notin \mathfrak{X}$. We always have $i \leq (i : a) $. Let $x$ be any compact element such that $x \leq (i : a)$. Therefore  $x \cdot a \leq i$. Since  $i$ is an $\mathfrak{X}$-element and $a \notin \mathfrak{X} $, we get $x \leq i$. Hence $(i : a) \leq i$, as $L$ is a $c$-lattice. Therefore $i = (i : a)$ for all $a \notin \mathfrak{X}$.
	
	Conversely, suppose that 
	$i = (i : a)$ for all $a \notin \mathfrak{X} $. Let $c, d \in L$ such that $c \cdot d ~\leq i$ with $c ~\notin \mathfrak{X}$. We claim that $ d \leq i$. Since  $c \cdot d ~\leq i$, we have $d \leq (i : c)$. As $c ~\notin \mathfrak{X}$, by the assumption  $(i : c) = i$, we have $d \leq i$. Therefore, $i$ is an $\mathfrak{X}$-element of $L$.
	Further, ``in particular part" is easy to observe.
\end{proof}

\begin{lem} \label{4.17.}  Let $i$ be a proper element of a multiplicative lattice $L$ and $\mathfrak{X}$ be an $M$-closed subset of $L$. Then the following statements are equivalent.
	
	\begin{enumerate}
		\item $i$ is an $\mathfrak{X}$-element of $L$.
		\item $(i : a)$ is an $\mathfrak{X}$-element of $L$ for every $a \nleq i$.
		\item $\Bigl((i:a)\Bigr] \subseteq  \mathfrak{X}$ for all $a \nleq i$.
	\end{enumerate}
  \end{lem}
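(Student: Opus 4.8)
The plan is to prove the three conditions equivalent as a single cycle $(1)\Rightarrow(2)\Rightarrow(3)\Rightarrow(1)$; each arrow is then short, and since $L$ is only assumed multiplicative (not a $c$-lattice) I will rely solely on the definition of an $\mathfrak{X}$-element, the residuation adjunction $x\cdot b\leq a\Leftrightarrow x\leq(a:b)$, and Lemma~\ref{4.10.}. Before starting the cycle I would record the standing remark that whenever $a\nleq i$ the residual $(i:a)$ is proper: if $(i:a)=1$ then $a=1\cdot a\leq i$, contradicting $a\nleq i$. This is what makes the phrase ``$(i:a)$ is an $\mathfrak{X}$-element'' a meaningful assertion in (2).

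For $(1)\Rightarrow(2)$ I would fix $a\nleq i$ and check the defining condition for $(i:a)$: assume $c\cdot d\leq(i:a)$ with $c\notin\mathfrak{X}$. Translating through the adjunction gives $(c\cdot d)\cdot a\leq i$, and regrouping by associativity and commutativity yields $c\cdot(d\cdot a)\leq i$. Now the hypothesis that $i$ is an $\mathfrak{X}$-element, applied with $c\notin\mathfrak{X}$, forces $d\cdot a\leq i$, i.e.\ $d\leq(i:a)$; hence $(i:a)$ is an $\mathfrak{X}$-element for every $a\nleq i$. The step $(2)\Rightarrow(3)$ is then immediate: for each $a\nleq i$, $(i:a)$ is an $\mathfrak{X}$-element by (2), so Lemma~\ref{4.10.} gives $\Bigl((i:a)\Bigr]\subseteq\mathfrak{X}$.

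For $(3)\Rightarrow(1)$ I would argue the inner implication by contradiction. Suppose $c\cdot d\leq i$ with $c\notin\mathfrak{X}$ but $d\nleq i$. Applying (3) with $a=d$ gives $\Bigl((i:d)\Bigr]\subseteq\mathfrak{X}$, while $c\cdot d\leq i$ is equivalent to $c\leq(i:d)$, so $c\in\Bigl((i:d)\Bigr]\subseteq\mathfrak{X}$, contradicting $c\notin\mathfrak{X}$. Thus $d\leq i$, and since $i$ is proper by hypothesis it is an $\mathfrak{X}$-element.

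I do not expect a genuine obstacle; the argument is a repackaging of residuation together with Lemma~\ref{4.10.}, and no $c$-lattice or compactness hypothesis enters. The only points needing care are bookkeeping: verifying propriety of $(i:a)$ so that assertion (2) is well posed, and noticing that the quantifier in (2) and (3) ranges over $a\nleq i$ rather than over $a\notin\mathfrak{X}$ as in the companion Lemma~\ref{4.13.}. Phrasing the entire cycle in terms of $a\nleq i$ keeps these two conditions from ever needing to be reconciled.
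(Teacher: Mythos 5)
Your proposal is correct and follows essentially the same route as the paper: the same cycle $(1)\Rightarrow(2)\Rightarrow(3)\Rightarrow(1)$, with $(1)\Rightarrow(2)$ by residuation and regrouping, $(2)\Rightarrow(3)$ by Lemma~\ref{4.10.}, and $(3)\Rightarrow(1)$ by contradiction via $c\leq(i:d)$. Your explicit check that $(i:a)$ is proper corresponds to the paper's remark that $(i:j)\neq 1$.
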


\begin{proof} $(1) \implies (2)$:  Suppose that $i$ is an $\mathfrak{X}$-element of  $L$ with $j \nleq i$. Clearly, $(i : j) \not = 1$. Let $a, b \in L$ such that $ a \cdot b \leq (i : j)$ with $a \notin \mathfrak{X}$. So $a \cdot b \cdot j \leq i$. As $i$ is an $\mathfrak{X}$-element and $a \notin \mathfrak{X}$, we get $ b \cdot j \leq i$, i.e., $b \leq (i : j)$. Therefore $(i : j)$ is an $\mathfrak{X}$-element of $L$.

$(2) \implies (3)$: follows from Lemma \ref{4.10.}.

$(3) \implies (1)$: 
Suppose that  $((i : a)] \subseteq \mathfrak{X} $ for all $a \nleq i$. Let $c, d \in L$ such that $c \cdot d \leq i$ with $c \notin \mathfrak{X} $. We claim that $d \leq i$. Suppose $d \nleq i$. So by the assumption and $c \leq (i : d) $, we have $c\in \mathfrak{X}$, a contradiction. Therefore $d \leq i$. Hence $i$ is an $\mathfrak{X}$-element of $L$.	 
\end{proof}

\begin{lem} \label{4.18.} Let $L$ be a multiplicative lattice  and $\mathfrak{X}$ be an $M$-closed subset of $L$. If $i$ is a maximal $\mathfrak{X}$-element of $L$, then $i$ is a prime element of $L$.  \end{lem}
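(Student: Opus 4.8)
The plan is to verify the defining property of a prime element directly: I take arbitrary $a, b \in L$ with $a \cdot b \leq i$ and $a \nleq i$, and aim to deduce $b \leq i$. Interpreting \emph{maximal $\mathfrak{X}$-element} as an $\mathfrak{X}$-element that is maximal within the set of all $\mathfrak{X}$-elements of $L$, the whole argument will hinge on the residual $(i : a)$ together with the maximality hypothesis.

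First I would pass to the residual. Since $i$ is an $\mathfrak{X}$-element and $a \nleq i$, the equivalence $(1) \Longleftrightarrow (2)$ of Lemma \ref{4.17.} applies and tells me that $(i : a)$ is again an $\mathfrak{X}$-element of $L$; in particular it is a proper element of $L$, because every $\mathfrak{X}$-element is proper by Definition \ref{4.1.}. This is the step that does the real work, and it is entirely outsourced to Lemma \ref{4.17.}, so no new computation is needed here.

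Next I would invoke maximality. We always have $i \leq (i : a)$ by the basic properties of residuation recorded in Definition \ref{1.1.}. Thus $(i : a)$ is a proper $\mathfrak{X}$-element lying above $i$, and since $i$ is \emph{maximal} among $\mathfrak{X}$-elements, this forces $(i : a) = i$. Finally, from $a \cdot b \leq i$ I get $b \leq (i : a)$, and combining this with $(i : a) = i$ yields $b \leq i$, which is exactly what was required. Hence $i$ is prime.

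I do not expect a genuine obstacle, since the technical content is already packaged in Lemma \ref{4.17.}. The only points demanding a little care are conceptual rather than computational: ensuring that ``maximal $\mathfrak{X}$-element'' is read as maximal \emph{within the class of $\mathfrak{X}$-elements} (so that comparison of $(i:a)$ with $i$ is legitimate), and noting that $(i:a)$ is automatically proper so that it genuinely qualifies as an $\mathfrak{X}$-element witnessing the maximality of $i$.
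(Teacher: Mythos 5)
Your argument is correct and coincides with the paper's own proof: both pass to the residual $(i:a)$, invoke Lemma \ref{4.17.} to see it is an $\mathfrak{X}$-element, use maximality together with $i \leq (i:a)$ to conclude $(i:a)=i$, and then read off $b \leq i$ from $a\cdot b \leq i$. No differences worth noting.
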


\begin{proof} Suppose $i$ is a maximal $\mathfrak{X}$-element of $L$. Let $a, b \in L$ such that $ a \cdot b \leq i $ and  $a \nleq i$. Since $i$ is an $\mathfrak{X}$-element and $a \nleq i$, by Lemma \ref{4.17.}, $(i : a)$ is an $\mathfrak{X}$-element of $L$. As $i$ is a maximal $\mathfrak{X}$-element of $L$ and $i \leq (i : a)$, we get $ (i : a) = i$. Therefore $b \leq i$.  
\end{proof}

\begin{lem}\label{2.8.} Let $j$ be a proper element of a $c$-lattice $L$ and  $\mathfrak{X} = (j]$  be an $M$-closed subset of $L$. Then   a proper element $i$ is  an $\mathfrak{X}$-element if and only if the condition $(*)$:
	
	$(*)$: for all $a, ~b ~\in L_*$ (set of all compact elements), $a \cdot b ~\leq i$ with $a ~\notin \mathfrak{X}$  implies $b~ \leq i$. 
\end{lem}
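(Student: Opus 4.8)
The plan is to prove the two implications separately. The forward direction is immediate: if $i$ is an $\mathfrak{X}$-element, then the defining condition of Definition \ref{4.1.} holds for \emph{all} $a, b \in L$, so in particular it holds when $a, b$ are restricted to $L_*$, which is precisely condition $(*)$. No further work is needed here.

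The substance lies entirely in the converse. Assume $(*)$ holds and take arbitrary $a, b \in L$ with $a \cdot b \leq i$ and $a \notin \mathfrak{X}$; I must deduce $b \leq i$. The key observation is that because $\mathfrak{X} = (j]$, the set-membership hypothesis $a \notin \mathfrak{X}$ is equivalent to the order statement $a \nleq j$. Since $L$ is a $c$-lattice, hence compactly generated, the characterization of algebraic lattices recalled in Definition \ref{1.1.} supplies a nonzero compact element $c \in L_*$ with $c \leq a$ and $c \nleq j$; equivalently $c \in L_*$ and $c \notin \mathfrak{X}$. From $c \leq a$ I obtain $c \cdot b \leq a \cdot b \leq i$.

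Next I would break $b$ into its compact constituents. Writing $b = \bigvee_{\beta} b_{\beta}$ with each $b_{\beta} \in L_*$ (possible since $L$ is algebraic), the distributivity of multiplication over arbitrary joins gives $c \cdot b = \bigvee_{\beta}(c \cdot b_{\beta}) \leq i$, so $c \cdot b_{\beta} \leq i$ for every $\beta$. Now both $c$ and $b_{\beta}$ lie in $L_*$ and $c \notin \mathfrak{X}$, so condition $(*)$ applies directly and yields $b_{\beta} \leq i$ for each $\beta$. Taking the join, $b = \bigvee_{\beta} b_{\beta} \leq i$, which is exactly what we wanted; hence $i$ is an $\mathfrak{X}$-element of $L$.

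The single step that genuinely exploits the hypotheses at hand---and where a careless argument would collapse---is the extraction of a compact $c \leq a$ with $c \notin \mathfrak{X}$. This is where both the $c$-lattice assumption and the special form $\mathfrak{X} = (j]$ enter: for a general $M$-closed set, a compact element lying below $a$ need not remain outside $\mathfrak{X}$ even when $a \notin \mathfrak{X}$, so the reduction from arbitrary elements to compact elements would fail. Because $\mathfrak{X}$ is the principal ideal $(j]$, however, $a \nleq j$ forces some compact $c \leq a$ to satisfy $c \nleq j$, and from that point the remainder is a routine application of $(*)$ together with the join computation above. I therefore expect no serious obstacle beyond phrasing this extraction step correctly.
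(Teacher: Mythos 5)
Your proof is correct and follows essentially the same route as the paper's: both extract a compact $x \leq a$ with $x \nleq j$ (the only place where $\mathfrak{X} = (j]$ and the $c$-lattice hypothesis are genuinely used) and then reduce $b$ to its compact parts before applying $(*)$. No issues.
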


\begin{proof} 
	
	Assume that the condition $(*)$ holds. Let $a, b \in L$ such that $a \cdot b ~\leq i$ with $a ~\notin \mathfrak{X}$.  As $L$ is a $c$-lattice and $a ~\nleq j$, there exists $(0\not=) x \in L_*$ such that $x \leq a$ and $x ~\nleq j$. Now, let $y$ be a compact element such that $ y \leq b$. As $x \cdot y \leq a \cdot b \leq  i$ with $x ~\notin \mathfrak{X}=(j]$, by the condition $(*)$,  $ y \leq i$. Thus  every compact element $\leq b$ is  $\leq i$ and $L$ is a $c$-lattice, we get $b \leq i$. Hence $i$ is an $\mathfrak{X}$-element. 	The converse is obvious.
\end{proof}

\begin{lem}\label{a}
	Let $L$ be a $c$-lattice and $\mathfrak{X}=(j]$ be an $M$-closed subset of $L$.
	Then for a prime element $i$ of $L$ with $j \leq i$, $i$ is an $\mathfrak{X}$-element if and only if $i=j$. 
\end{lem}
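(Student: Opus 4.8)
The plan is to reduce both implications to the already-established Lemma~\ref{4.10.}, which pins down the behaviour of $\mathfrak{X}$-elements precisely when $\mathfrak{X}$ is a principal ideal. Since here $\mathfrak{X} = (j]$ is itself a principal ideal, both directions should fall out almost immediately; the only genuine extra input beyond Lemma~\ref{4.10.} is the standing hypothesis $j \leq i$, which I will use to upgrade a containment into an equality.

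For the forward implication, I would assume that $i$ is an $\mathfrak{X}$-element and invoke the first part of Lemma~\ref{4.10.} to obtain $(i] \subseteq \mathfrak{X} = (j]$. Since $i \in (i]$, this yields $i \in (j]$, that is, $i \leq j$. Combining this with the hypothesis $j \leq i$ gives $i = j$ by antisymmetry of the order. It is worth noting that the primeness of $i$ is not needed in this direction.

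For the converse, I would assume $i = j$, so that $\mathfrak{X} = (j] = (i]$. This means the condition $(i] = \mathfrak{X}$ required by the ``in particular'' clause of Lemma~\ref{4.10.} is satisfied. That clause asserts that, under exactly this hypothesis, $i$ is an $\mathfrak{X}$-element if and only if $i$ is prime. Since $i$ is assumed prime, I conclude that $i$ is an $\mathfrak{X}$-element, completing the proof.

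I do not anticipate any serious obstacle: the statement is essentially the specialization of Lemma~\ref{4.10.} to the case where the chosen $M$-closed set is the principal ideal sitting below a prime element. The single point that warrants care is bookkeeping — keeping the two hypotheses on $i$, namely primeness and $j \leq i$, correctly matched to the two directions, since each implication draws on a different one of them.
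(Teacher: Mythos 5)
Your proof is correct and follows essentially the same route as the paper: the forward direction is the paper's argument verbatim (Lemma~\ref{4.10.} gives $i \leq j$, then antisymmetry with $j \leq i$), and for the converse the paper simply re-runs the primeness check inline, which is exactly what the ``in particular'' clause of Lemma~\ref{4.10.} that you cite already encapsulates. Your observation that primeness is not needed in the forward direction is also accurate.
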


\begin{proof}
	Assume that $i$ is a prime element which  is also an $\mathfrak{X}$-element of $L$.  By Lemma   \ref{4.10.}, we have $i \leq j$. This together with $j \leq i$, we have $i=j$. Conversely, assume that $i=j$ and $i$ is prime. To prove $i$ is an $\mathfrak{X}$-element, assume that $a\cdot b \leq i$ and $a \notin \mathfrak{X}$. Then by primeness of $i$ and $i=j$, we have $b \leq i$. 
\end{proof}

\begin{cor}\label{cor-a}
	Let $L$ be a $c$-lattice and $\mathfrak{X}=(j]$ be an $M$-closed subset of $L$. Then for a maximal element $i$ of $L$, $i$ is an $\mathfrak{X}$-element if and only if $i=j$. 
\end{cor}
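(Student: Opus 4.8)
The plan is to obtain Corollary \ref{cor-a} as an immediate specialization of Lemma \ref{a}, using two extra ingredients already available: a maximal element of a $c$-lattice is prime (noted in the preliminaries), and every $\mathfrak{X}$-element lies below $j$ (Lemma \ref{4.10.}). Since $i$ is maximal, it is in particular a proper element and it is prime, so as soon as we know the comparability of $i$ and $j$ we are exactly in the hypotheses of Lemma \ref{a}. Thus the only real work is to pin down whether $i \leq j$ or $j \leq i$ in each direction of the equivalence.

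For the ``only if'' direction I would assume $i$ is an $\mathfrak{X}$-element and invoke Lemma \ref{4.10.} to get $(i] \subseteq \mathfrak{X} = (j]$, hence $i \leq j$. Since $j$ is a proper element, $j < 1$, so $i \leq j < 1$, and the defining property of a maximal element ($i \leq n < 1 \Rightarrow i = n$) upgrades this to $i = j$. Note this half uses neither primeness nor Lemma \ref{a}: it rests only on Lemma \ref{4.10.} and the maximality of $i$.

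For the ``if'' direction I would assume $i = j$, so that $j \leq i$ holds trivially and $i$ is a prime element with $j \leq i$; Lemma \ref{a} then yields that $i$ is an $\mathfrak{X}$-element because $i = j$. Equivalently, one can argue directly: if $a \cdot b \leq i$ with $a \notin \mathfrak{X} = (i]$, then $a \nleq i$, and primeness of $i$ forces $b \leq i$.

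I do not anticipate a genuine obstacle, since the statement is essentially Lemma \ref{a} read for maximal elements. The single point that needs care is the forward direction: Lemma \ref{a} cannot be quoted verbatim there, because it presupposes $j \leq i$, whereas an $\mathfrak{X}$-element only gives $i \leq j$ via Lemma \ref{4.10.}. The correct conclusion $i = j$ must therefore be extracted by combining $i \leq j$ with the maximality of $i$ (promoting $i \leq j < 1$ to equality) rather than by a direct appeal to Lemma \ref{a}.
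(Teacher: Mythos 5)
Your proof is correct and follows essentially the same route as the paper: the forward direction uses Lemma \ref{4.10.} to get $i \leq j$ and then maximality of $i$ to conclude $i = j$, while the converse uses that a maximal element is prime and applies Lemma \ref{a}. Your remark that Lemma \ref{a} cannot be invoked directly in the forward direction is accurate and is exactly how the paper handles it.
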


\begin{proof}
	Assume that $i$ is a maximal element which  is also an $\mathfrak{X}$-element of $L$.  By Lemma   \ref{4.10.}, we have $i \leq j$. Thus by the maximality of $i$, we have $i=j$. Conversely, assume that $i=j$. Since $i$ is maximal, it is prime. Thus the result follows from Lemma \ref{a}. \end{proof}


\begin{thm} \label{2.20.}
	Let L be a $c$-lattice and  $\mathfrak{X}=(j]$ be an $M$-closed subset of $L$, where \\$j = \bigwedge P$ where $P=\bigl\{i_k\ | \ i_k \text{ is a prime elements of  } L\bigr\}$. Then the following statements are equivalent.
	
	\begin{enumerate}
		\item There exists an $\mathfrak{X}$-element in $L$.
		\item $j$ is a prime element of $L$.
		
	\end{enumerate}
	Moreover, if the set $Min(L)$ of all minimal prime elements in $L$  is finite, then all the above conditions are equivalent to $(*)$: $|Min(L)|=1$.
\end{thm}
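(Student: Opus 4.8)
The plan is to establish the equivalence $(1)\Leftrightarrow(2)$ first and to treat the ``moreover'' clause separately. The implication $(2)\Rightarrow(1)$ is immediate: since $\mathfrak{X}=(j]$, if $j$ is prime then the ``in particular'' part of Lemma \ref{4.10.} (applied with $i=j$, so that $(i]=(j]=\mathfrak{X}$) shows that $j$ is itself an $\mathfrak{X}$-element, and hence an $\mathfrak{X}$-element exists; equivalently one may quote Lemma \ref{a} with $i=j$. I would also remark in passing that $\mathfrak{X}=(j]$ is automatically $M$-closed, since $a,b\le j$ forces $a\cdot b\le a\le j$, so the hypothesis is harmless. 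All of the real content is therefore in $(1)\Rightarrow(2)$.

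For $(1)\Rightarrow(2)$, suppose $i$ is an $\mathfrak{X}$-element of $L$. First I would use Lemma \ref{4.10.} to get $(i]\subseteq\mathfrak{X}=(j]$, i.e.\ $i\le j$. Since $j=\bigwedge P$ sits below every prime element, we get $i\le p$ for every prime $p$, so every prime element lies above $i$; combined with the standard $c$-lattice description of the radical, $\sqrt{a}=\bigwedge\{p\ |\ p\text{ prime},\,p\ge a\}$, this collapses to $\sqrt{i}=\bigwedge P=j$. I would then prove that $j=\sqrt{i}$ is prime via the compact characterization of primeness recorded in the preliminaries: it suffices to take $a,b\in L_*$ with $a\cdot b\le j=\sqrt{i}$ and $a\nleq j$ and deduce $b\le j$. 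Since the product of compact elements is compact and a compact element below $\sqrt{i}$ has a power below $i$, we get $(a\cdot b)^n=a^n\cdot b^n\le i$ for some $n$. Moreover $a^n\notin\mathfrak{X}$, i.e.\ $a^n\nleq j=\sqrt{i}$, for otherwise $a^n\le\sqrt{i}$ would give $(a^n)^k\le i$ and hence $a\le\sqrt{i}=j$, a contradiction. Now $a^n\cdot b^n\le i$ with $a^n\notin\mathfrak{X}$ and $i$ an $\mathfrak{X}$-element force $b^n\le i$, and since $b$ is compact the definition of the radical gives $b\le\sqrt{i}=j$. This is exactly the implication required, so $j$ is prime.

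For the ``moreover'' clause I would first note that $j=\bigwedge P=\bigwedge Min(L)$, because every prime element of a $c$-lattice lies above a minimal prime, so the two meets coincide; write $Min(L)=\{p_1,\dots,p_n\}$. If $j$ is prime, then from $p_1\cdots p_n\le p_1\wedge\cdots\wedge p_n=j$ and primeness of $j$ some $p_k\le j$; since also $j\le p_k$, we get $j=p_k$, and then for each $l$ we have $p_k=j\le p_l$ with $p_l$ minimal, forcing $p_l=p_k$, whence $|Min(L)|=1$. Conversely, if $|Min(L)|=1$ then $j=\bigwedge Min(L)$ equals the unique minimal prime, so $j$ is prime, giving $(2)$.

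The step I expect to be the main obstacle is $(1)\Rightarrow(2)$, and within it the reduction $\sqrt{i}=j$ together with the radical bookkeeping on compact elements: everything hinges on knowing that all primes lie above $i$ (so that the radical of $i$ collapses onto $j$) and on the $c$-lattice fact that a compact element below a radical has a power below the element. The finiteness of $Min(L)$ enters only in the last paragraph, where it lets me replace the infinite meet by the finite product $p_1\cdots p_n$ and apply primeness directly.
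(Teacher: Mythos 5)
Your proof is correct, but your argument for the key implication $(1)\Rightarrow(2)$ is genuinely different from the paper's. The paper runs a Zorn's-lemma argument on the poset $\beta$ of all $\mathfrak{X}$-elements: it shows joins of chains of $\mathfrak{X}$-elements are again $\mathfrak{X}$-elements (via compactness of $a\cdot b$), extracts a maximal $\mathfrak{X}$-element $w$, invokes Lemma \ref{4.18.} to conclude $w$ is prime, and then squeezes $j\le w$ (as $j$ is the meet of all primes) against $w\le j$ (Lemma \ref{4.10.}) to get $j=w$ prime. You instead show directly that $\sqrt{i}=j$ for any $\mathfrak{X}$-element $i$ (since $i\le j$ forces every prime to lie above $i$, collapsing the radical onto $\bigwedge P$) and then verify primeness of $j$ on compact elements by the power trick: $a\cdot b\le\sqrt{i}$ with $a,b$ compact gives $a^n\cdot b^n\le i$, the hypothesis $a\nleq j$ rules out $a^n\in\mathfrak{X}$, and the $\mathfrak{X}$-element property yields $b^n\le i$, hence $b\le\sqrt{i}=j$. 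This is essentially the computation the paper later performs inside Lemma \ref{2.22.}, so your route in effect inlines that lemma and dispenses with the Zorn step entirely; the paper's route buys the additional structural fact that a maximal $\mathfrak{X}$-element exists and is prime, which your argument does not produce. Both proofs handle $(2)\Rightarrow(1)$ identically (Lemma \ref{a}), and your treatment of the ``moreover'' clause via $p_1\cdots p_n\le j$ is the same idea as the paper's, only spelled out more carefully than the paper's rather terse ``by primeness of $j$ \dots\ we have $j=i_k$''. Two small points worth making explicit: the compact characterization of primeness requires $j\ne 1$, which follows because an $\mathfrak{X}$-element is proper and hence lies below a maximal (prime) element of the $c$-lattice; and the identification $\bigwedge P=\bigwedge Min(L)$ uses that every prime lies above a minimal prime, a fact the paper also uses silently.
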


\begin{proof} (1) $\Rightarrow$ (2):	Suppose there exists an $\mathfrak{X}$-element $i$ in $L$. Let $ \beta = \{x \ | \ x$ is an $\mathfrak{X}$-element in $L\} $. As $i \in \beta$, $\beta$ is a poset under induced partial order of $L$. Let $j_1 \leq j_2 \leq \cdots \leq j_n \leq \cdots $ be a chain $\mathcal{C}$ in $\beta$. We claim that $j = \bigvee^\infty_{ \alpha = 1} j_\alpha$ is in $\beta$, i.e., $j = \bigvee^\infty_{ \alpha = 1} j_\alpha$ is an $\mathfrak{X}$-element of $L$. Let $a, b \in L_*$ (set of all compact element of $L$) such that $a \cdot b \leq j = \bigvee^\infty_{ \alpha = 1} j_\alpha$ and $a \not\leq j$. As $L$ is a $c$-lattice  and $a, b \in L_*$, we get $a \cdot b \in L_*$. Therefore $a \cdot b \leq j_1 \vee j_2 \vee\cdots \vee j_n$ for some $j_1, j_2, \cdots , j_n \in  \mathcal{C}$. Since $\mathcal{C} $ is a chain, we must have $j_1 \vee j_2 \vee\cdots \vee j_n = j_\gamma$ for some $\gamma$, where $1 \leq \gamma \leq n$. Thus $a \cdot b \leq j_\gamma$ with $a \nleq j_\gamma$. Since $j_\gamma$ is an $\mathfrak{X}$-element of $L$, we have $b \leq j_\gamma$. Thus $b \leq  \bigvee^\infty_{ \alpha = 1} j_\alpha = j$. Therefore $j$ is an $\mathfrak{X}$-element of $L$. By Zorn's Lemma $\beta$ has a maximal element $w$, that is, $w$ is a maximal $\mathfrak{X}$-element. By Lemma \ref{4.18.}, $w$ is a prime element of $L$, that is, $w \in P$. Hence $j \leq w$.
	Also, $w$ is an $\mathfrak{X}$-element, we have by Lemma \ref{4.10.}, $w \leq j$. Thus $j=w$. Hence $j$ is prime.	
	
	(2) $\Rightarrow$ (1): Let $j$ be a prime element. By Lemma \ref{a}, $j$ is an $\mathfrak{X}$-element.

	\vskip5pt 
	We, now, prove $(2) \iff (*)$.
	
	Assume that $j$ is a prime element.  Since $j = \bigwedge\bigl\{i_k\ | \ i_k \text{ is a prime elements of  } L\bigr\}$ and the set $Min(L)$ is finite, without loss of generality, we assume that $\displaystyle j = \bigwedge_{k=1}^n\bigl\{i_k\ | \ i_k \in Min(L)\bigr\}$. By primeness of $j$ and $j \leq i_k$ for all $k$ with $i_k\in Min(L)$, we have $j=i_k$ for all $k$. Thus $|Min(L)|=1$.  
	
	Conversely, assume that 
	$|Min(L)|= 1$. Let $p$ be the only minimal prime element in $L$. Then $p \leq i_k$ for every $k$. Hence $j=p$. This proves that $j$ is prime.
\end{proof}

\begin{lem}[{L. Fuchs and R. Reis \cite[Lemma 2.5]{FR}}]\label{2.19.}
	Let $L$ be a $c$-lattice and $a \in L$. Then radical of $a$ is
	given by $\sqrt{a}= \bigwedge \{ p \in L \colon\ $p is  a minimal prime element over $a\}$.\end{lem}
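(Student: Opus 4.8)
The plan is to prove the two inequalities $\sqrt{a} \leq \bigwedge\{p : p \text{ minimal prime over } a\}$ and $\bigwedge\{p : p \text{ minimal prime over } a\} \leq \sqrt{a}$ separately, exploiting throughout that in a $c$-lattice an inequality $x \leq y$ holds as soon as every compact element $\leq x$ is $\leq y$ (since $x$ is the join of the compact elements below it). Recall also that for a compact $c$ one has $c \leq \sqrt{a}$ if and only if $c^n \leq a$ for some $n$, the forward direction being recorded in the text and the converse being immediate from the definition of $\sqrt{a}$ as a join.

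For the easy inequality I would fix any prime $p$ with $a \leq p$ and any compact generator $x$ of $\sqrt{a}$, so that $x^n \leq a \leq p$ for some $n$. An induction on $n$ using the primeness of $p$ in its compact form (from $x \cdot x^{n-1} \leq p$ one gets $x \leq p$ or $x^{n-1} \leq p$) yields $x \leq p$. Since this holds for every compact generator of $\sqrt{a}$, we obtain $\sqrt{a} \leq p$, and taking the meet over all minimal primes over $a$ gives $\sqrt{a} \leq \bigwedge\{p : p \text{ minimal prime over } a\}$.

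For the reverse inequality I would first reduce it to all primes by showing $\bigwedge\{\text{minimal primes over } a\} = \bigwedge\{\text{all primes over } a\}$: every prime over $a$ dominates some minimal prime over $a$, which follows from a Zorn's Lemma argument on descending chains of primes together with the fact that the meet of a chain of primes is again prime (checked on compact $u,v$ with $u\cdot v$ below the meet, by comparing within the chain the two primes that would witness $u\not\leq$ and $v\not\leq$). Then I would argue contrapositively: given a compact $c$ with $c \not\leq \sqrt{a}$, i.e. $c^n \not\leq a$ for all $n$, I would apply Zorn's Lemma to the nonempty family of elements $q \geq a$ with $c^n \not\leq q$ for every $n$ to obtain a maximal such element $p$.

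The crux is the primeness of this $p$. If $u \cdot v \leq p$ with $u \not\leq p$ and $v \not\leq p$, then $p \vee u$ and $p \vee v$ strictly dominate $p$, so by maximality some $c^m \leq p \vee u$ and some $c^{m'} \leq p \vee v$; expanding $(p\vee u)\cdot(p\vee v) = p\cdot p \vee u\cdot p \vee p\cdot v \vee u\cdot v$ by join-distributivity of the multiplication and noting that each of the four terms lies below $p$ (using $x\cdot y \leq x\wedge y$ and $u\cdot v \leq p$), we get $c^{m+m'} \leq p$, contradicting the defining property of $p$. Hence $p$ is prime, $a \leq p$, and $c = c^1 \not\leq p$, so $c \not\leq \bigwedge\{\text{all primes over } a\} = \bigwedge\{\text{minimal primes over } a\}$; letting $c$ range over the compacts below the right-hand meet gives $\bigwedge\{\text{minimal primes over } a\} \leq \sqrt{a}$. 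The main obstacle will be organizing the two Zorn's Lemma steps cleanly — the descending-chain argument guaranteeing minimal primes and the ascending construction of $p$ — and in each case verifying primeness through the multiplicative distributive law rather than through ordinary meet.
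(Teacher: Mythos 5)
The paper offers no proof of this lemma --- it is quoted verbatim from Fuchs and Reis \cite[Lemma 2.5]{FR} --- so there is nothing to compare your argument against; judged on its own, your proof is correct and is the standard one. The inequality $\sqrt{a}\leq\bigwedge\{p\}$ via induction on $x^{n}\leq p$ is fine, and the two Zorn arguments both go through: for the ascending one, the key point you implicitly use is that $c^{n}$ is compact (this is exactly the $c$-lattice axiom that products of compact elements are compact), so $c^{n}\leq\bigvee q_{\alpha}$ forces $c^{n}$ below a single member of the chain; for the descending one, checking primeness of the meet of a chain on compact pairs suffices because the paper records that primeness in a $c$-lattice can be tested on compact elements. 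The verification that the maximal element $p$ is prime via $(p\vee u)\cdot(p\vee v)=p\cdot p\vee u\cdot p\vee p\cdot v\vee u\cdot v\leq p$ is exactly right. The only cosmetic gap is the degenerate case $a=1$, where the set of primes over $a$ is empty and both sides equal $1$; for $a<1$ your construction with $c=1$ already produces a prime over $a$, so the meets are over nonempty sets.
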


\begin{lem} \label{2.22.} Let L be a $c$-lattice and  $\mathfrak{X}=(j]$ be an $M$-closed subset of $L$, where \\$j = \bigwedge\bigl\{i_k\ | \ i_k \text{ is a prime element of  } L\bigr\}$. Then a proper element $i$  is an $\mathfrak{X}$-element of $L$ if and only if $i$ is a primary element of $L$ and $\sqrt{i} = j$.
\end{lem}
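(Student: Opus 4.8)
The plan is to first identify the element $j$ as the radical of $0$ and record that it is a radical element; everything else then follows by directly unwinding the two definitions. Since $j = \bigwedge P$ where $P$ is the set of all prime elements, and since every prime element of a $c$-lattice lies above some minimal prime element, the meet over all primes coincides with the meet over the minimal prime elements; by Lemma \ref{2.19.} applied to $a = 0$ this latter meet is exactly $\sqrt{0}$. Hence $j = \sqrt{0}$, which is in particular a radical element, so that $\sqrt{j} = j$. This single identification is the crux of the argument.

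For the forward direction I would assume $i$ is an $\mathfrak{X}$-element and first establish $\sqrt{i} = j$. Lemma \ref{4.10.} gives $(i] \subseteq \mathfrak{X} = (j]$, i.e.\ $i \leq j$, whence $\sqrt{i} \leq \sqrt{j} = j$; conversely $0 \leq i$ forces $j = \sqrt{0} \leq \sqrt{i}$, and the two inequalities yield $\sqrt{i} = j$. To see that $i$ is primary, take any $a, b$ with $a \cdot b \leq i$; if $b \not\leq \sqrt{i} = j$, then $b \notin \mathfrak{X}$, so writing the product as $b \cdot a \leq i$ and invoking the $\mathfrak{X}$-element property gives $a \leq i$. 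Thus $a \cdot b \leq i$ implies $a \leq i$ or $b \leq \sqrt{i}$, which is exactly primariness.

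For the converse I would assume $i$ is primary with $\sqrt{i} = j$ and verify the $\mathfrak{X}$-element condition. Suppose $a \cdot b \leq i$ with $a \notin \mathfrak{X} = (j]$, i.e.\ $a \not\leq j = \sqrt{i}$. Applying the primary condition to $b \cdot a \leq i$ gives $b \leq i$ or $a \leq \sqrt{i}$; since $a \not\leq \sqrt{i}$, the first alternative holds, so $b \leq i$ and $i$ is an $\mathfrak{X}$-element. Note that both directions lean on commutativity of the multiplication, which lets me apply the primary and $\mathfrak{X}$-conditions to whichever factor is convenient.

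The only genuinely substantive step is the identification $j = \sqrt{0}$ together with its radical-ness, and I expect the care to be concentrated there, specifically in justifying that the meet over all prime elements equals the meet over the minimal prime elements (that is, that each prime element dominates a minimal prime element) so that Lemma \ref{2.19.} can be brought to bear. Once $j = \sqrt{0}$ is in hand, the remaining implications are short and purely formal.
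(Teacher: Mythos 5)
Your proof is correct, and its overall skeleton matches the paper's: Lemma \ref{4.10.} gives $i\leq j$, radical computations give $\sqrt{i}=j$, and the two implications are then unwound from the definitions. There are two worthwhile differences. First, for the lower bound $j\leq\sqrt{i}$ the paper applies Lemma \ref{2.19.} to $i$ itself ($\sqrt{i}$ is a meet of primes, hence lies above $j$), whereas you first identify $j=\sqrt{0}$ and then use $\sqrt{0}\leq\sqrt{i}$; your route is fine but imports the extra standard fact that every prime dominates a minimal prime, which the paper's choice sidesteps (note also that the direction you actually need, $j \leq \sqrt{0}$, is immediate since the minimal primes over $0$ form a subset of all primes). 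Second, and more substantively, for primariness in the forward direction the paper invokes Theorem \ref{2.20.} to get that $j$ is prime and concludes ``$a\leq\sqrt{i}$ or $b\leq\sqrt{i}$,'' which does not literally match the stated definition of a primary element ($a\leq i$ or $b\leq\sqrt{i}$) without one more application of the $\mathfrak{X}$-condition; your argument --- if $b\nleq j$ then $b\notin\mathfrak{X}$, so $a\leq i$ directly from the $\mathfrak{X}$-element property --- lands exactly on the definition, avoids Theorem \ref{2.20.} altogether, and is the cleaner of the two.
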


\begin{proof} Suppose $i$ is an $\mathfrak{X}$-element of $L$.   By Lemma \ref{4.10.}, $(i] \subseteq \mathfrak{X}$. Hence $i \leq \sqrt{i} \leq \sqrt{j}=j$. Clearly, by Lemma \ref{2.19.},   $j \leq \sqrt{i}$. Hence $j = \sqrt{i}$. Let $a, b \in L$ such that $a \cdot b \leq i$. By Theorem \ref{2.20.}, $j$ is a prime element of $L$. Hence either $a \leq j$ or $b \leq j$. So either $a \leq \sqrt{i}$ or $b \leq \sqrt{i}$. Hence  $i$ is a primary element of $L$. 
	Conversely, suppose that  $i$ is a primary element of $L$ and $\sqrt{i}  = j$. Let $a, b \in L$ such that $a \cdot b \leq i$ and $a \nleq j$. Since $i$ is a primary element and $a \nleq j = \sqrt{i}$, we get $b \leq i$. Thus $i$ is an $\mathfrak{X}$-element of $L$. \end{proof}

\begin{lem} \label{2.23.} Let L be a $c$-lattice and  $\mathfrak{X}=(j]$ be an $M$-closed subset of $L$, where \\$j = \bigwedge\bigl\{i_k\ | \ i_k \text{ is a maximal element of  } L\bigr\}$. Then a proper element $i$  is an $\mathfrak{X}$-element of $L$ if and only if $i$ satisfies the following condition
	
	$(*)$: If $a\cdot b \leq i$, then $a \leq i$ or $b \leq m$, where $m= \bigwedge\bigl\{i_k\ | \ i_k \text{ is a maximal element\;}  \geq i~\bigr\}$ and $m = j$.
\end{lem}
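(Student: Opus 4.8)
The plan is to read $(*)$ as the conjunction of two requirements — the implication ``$a\cdot b\leq i \Rightarrow a\leq i$ or $b\leq m$'' and the equality $m=j$ — and to prove the two directions of the biconditional separately. Before starting, I would record three elementary observations. First, since $L$ is a $c$-lattice, $1$ is compact, so Zorn's Lemma produces a maximal element above the proper element $i$; hence the index set in $m=\bigwedge\{i_k \mid i_k \text{ maximal},\ i_k\geq i\}$ is nonempty and $i\leq m$. Second, $j=\bigwedge\{\text{all maximal elements}\}$ is a meet over a set containing the one defining $m$, so $j\leq m$ automatically. Third, $i\leq j$ is equivalent to ``every maximal element lies above $i$'', and the latter forces the two index sets to coincide, giving $m=j$.

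For the forward direction I would assume $i$ is an $\mathfrak{X}$-element. By Lemma \ref{4.10.}, $(i]\subseteq\mathfrak{X}=(j]$, so $i\leq j$, and by the third observation $m=j$. To verify the implication, I would argue contrapositively on the ``$b\leq m$'' alternative: given $a\cdot b\leq i$, suppose $b\not\leq j$, so $b\notin\mathfrak{X}$; applying the $\mathfrak{X}$-element property to $b\cdot a\leq i$ (rewriting the product by commutativity so that the factor outside $\mathfrak{X}$ is listed first) yields $a\leq i$. Thus either $b\leq j=m$ or $a\leq i$, which is exactly $(*)$.

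For the converse I would assume $(*)$. From $m=j$ together with $i\leq m$ I get $i\leq j$, so $\mathfrak{X}=(j]$ lies above $i$. To show $i$ is an $\mathfrak{X}$-element, take $a,b$ with $a\cdot b\leq i$ and $a\notin\mathfrak{X}$, i.e. $a\not\leq j=m$. Applying the implication in $(*)$ to the swapped product $b\cdot a\leq i$ gives $b\leq i$ or $a\leq m$; since $a\not\leq m$, I conclude $b\leq i$, which is precisely the defining property of an $\mathfrak{X}$-element.

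The only genuinely delicate point is the mismatch between the asymmetric shape of the $\mathfrak{X}$-element definition — which concludes that the factor \emph{other} than the one outside $\mathfrak{X}$ is $\leq i$ — and the asymmetric shape of $(*)$, which concludes $a\leq i$ or $b\leq m$. I expect the main obstacle to be bookkeeping this asymmetry correctly, and it is resolved entirely by the commutativity of the multiplication, which lets me apply each property to the product written in whichever order matches the hypothesis. A secondary point is justifying that the bundled equality $m=j$ is available: in the forward direction it is earned from $i\leq j$ via Lemma \ref{4.10.}, while in the converse it is supplied directly by $(*)$ and is what lets me pass from $a\not\leq j$ to $a\not\leq m$.
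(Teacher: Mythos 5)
Your proof is correct, but it is genuinely more self-contained than what the paper offers: the paper's entire proof of this lemma is the single line ``follows on similar lines as that of Lemma \ref{2.22.}'', and the template of Lemma \ref{2.22.} routes through the radical formula (Lemma \ref{2.19.}), the primeness of $j$ (Theorem \ref{2.20.}), and the notion of primary element. Transplanting that template to the Jacobson-radical setting is not entirely innocent: the analogue of Theorem \ref{2.20.} would require the maximal $\mathfrak{X}$-element produced by Zorn's Lemma to coincide with $J(L)$, and the argument used there (a prime $w$ belongs to the indexing set $P$, hence $j\leq w$) does not transfer, since a prime element need not be maximal. Your direct argument sidesteps all of this. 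You decompose $(*)$ into the implication plus the equality $m=j$, obtain $m=j$ in the forward direction from $i\leq j$ (Lemma \ref{4.10.}) and the resulting coincidence of the two index sets, and then handle the asymmetry between the definition of an $\mathfrak{X}$-element and the shape of $(*)$ by commutativity of the product — which is exactly the step the paper leaves implicit even in Lemma \ref{2.22.} itself, where the displayed conclusion ``$a\leq\sqrt{i}$ or $b\leq\sqrt{i}$'' is formally weaker than primariness. In short, your route buys a complete, checkable proof that needs only Lemma \ref{4.10.} and the definition, at the cost of not exhibiting the structural parallel with primary elements that the authors were emphasizing.
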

\begin{proof}
	follows on similar lines as that of Lemma \ref{2.22.}.
\end{proof}

\begin{lem}\label{cancellation}
	Let $L$ be a $c$-lattice and $\mathfrak{X}$ be an $M$-closed subset of $L$. Let $k$ be an element of $L$ such that $k \notin \mathfrak{X}$. If $i_1$ and $i_2$ are $\mathfrak{X}$-elements with $i_1k=i_2k$, then $i_1=i_2$. Further, if $i$ is an element such that $ik$ is an $\mathfrak{X}$-element, then $ik=i$. 
\end{lem}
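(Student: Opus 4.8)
The plan is to reduce both assertions to the colon characterization of $\mathfrak{X}$-elements established in Lemma \ref{4.13.}, which states that a proper element $i$ of the $c$-lattice $L$ is an $\mathfrak{X}$-element if and only if $(i:a)=i$ for every $a\notin\mathfrak{X}$. Since $k\notin\mathfrak{X}$ by hypothesis, this identity is available at the element $k$ for every $\mathfrak{X}$-element in sight, and the residuation facts recorded in Definition \ref{1.1.} (namely $x\cdot b\leq a\Leftrightarrow x\leq(a:b)$, together with $a\cdot b\leq a$) will supply the remaining inequalities.

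For the cancellation statement, first I would observe that $i_1$ and $i_2$ are proper elements, being $\mathfrak{X}$-elements by Definition \ref{4.1.}, so Lemma \ref{4.13.} applies to each of them at $a=k$. From $i_2 k=i_1 k\leq i_1$ I obtain $i_2\leq(i_1:k)=i_1$; interchanging the roles of $i_1$ and $i_2$ gives $i_1\leq(i_2:k)=i_2$. Hence $i_1=i_2$.

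For the ``further'' part, the inequality $ik\leq i$ holds automatically since $a\cdot b\leq a$ in any multiplicative lattice. For the reverse inequality, $ik$ is itself an $\mathfrak{X}$-element with $k\notin\mathfrak{X}$, so Lemma \ref{4.13.} yields $(ik:k)=ik$; and since $i\cdot k\leq ik$ (indeed with equality), we get $i\leq(ik:k)=ik$. Combining the two inequalities gives $ik=i$.

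I do not anticipate a genuine obstacle here, as both parts are short consequences of Lemma \ref{4.13.}. The only points requiring a little care are to make sure the hypothesis $k\notin\mathfrak{X}$ is genuinely invoked each time Lemma \ref{4.13.} is used, since this is precisely what legitimizes passing between an $\mathfrak{X}$-element and its colon quotient by $k$, and to note that every element to which Lemma \ref{4.13.} is applied is indeed proper, which is automatic because $\mathfrak{X}$-elements are proper by Definition \ref{4.1.}.
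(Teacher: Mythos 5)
Your proof is correct and is essentially the paper's argument: the paper applies the definition of an $\mathfrak{X}$-element directly to $i_1k=i_2k\leq i_2$ (and to $i\cdot k\leq ik$ for the further part), whereas you route the same step through the colon characterization of Lemma \ref{4.13.}, which is just the definition restated via the adjunction $x\cdot b\leq a\Leftrightarrow x\leq(a:b)$. The only cost of your phrasing is that it leans on the $c$-lattice hypothesis (needed for Lemma \ref{4.13.}) where the direct argument does not, but that hypothesis is assumed in the statement anyway.
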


\begin{proof}
	Clearly, $i_1k \leq i_2$ with $k \notin \mathfrak{X}$. Since $i_2$ is an $\mathfrak{X}$-element, we have $i_1 \leq i_2$. On similar lines we can prove that $i_2 \leq i_1$. Thus $i_1=i_2$. 	Now, we prove ``further" part. Since $ik$ is an $\mathfrak{X}$-element and $ik \leq ik$ with $k \notin \mathfrak{X}$, we have $i \leq ik$. The reverse inequality is always true. Hence $i=ik$.
\end{proof}

It is well-known that if a proper ideal $P$ of a commutative ring $R$ with unity is prime if and only if $R\setminus P$ is a multiplicatively closed subset of $R$. Analogously, a proper element $p$ of a $c$-lattice $L$ is prime if and only if $L \setminus (p]$ is an $M$-closed subset of $L$. To characterize $\mathfrak{X}$-element, we define $\mathfrak{X}$-multiplicatively closed subset of $L$ as follows.

\begin{defn}\label{4.28.} Let  $\mathfrak{X}$ be an $M$-closed subset of a $c$-lattice $L$. A non-empty subset $A$ of $L_*$ with $(L_* \setminus \mathfrak{X} ) \subseteq  A$ is called a \textit{$\mathfrak{X}$-multiplicatively closed subset} of $L$, if $a_1 \in (L_* \setminus \mathfrak{X})$ and $a_2 \in A$, then $a_1 \cdot a_2 \in A$.
\end{defn}

\begin{rem} \label{6.1.} If $A$ is  a $\mathfrak{X}$-multiplicatively closed subset of $L$, then $A$ need not be a multiplicatively closed subset of $L$. Consider a multiplicative lattice $K$ given in Example \ref{4.2.}. If we take $\mathfrak{X} = \{0,~ a,~ b, ~c, ~d\}$, then $ A = \{ 1,~c,~ d \}$ is an $\mathfrak{X}$-multiplicatively closed subset of $K$ but  $A = \{ 1,~c,~ d \}$ is not a multiplicatively closed subset of $K$, as $c, d \in A$ and $c \cdot d = 0 \notin A $.

	Also, if $A$ is a multiplicatively closed subset of $L$, then $A$ need not be a $\mathfrak{X}$-multiplicatively closed subset of $L$ for some $M$-closed subset $\mathfrak{X}$. Consider the ideal lattice $L$ of the ring $\mathbb{Z}_{12}$. Then  $\{(1) \}$  is a multiplicatively closed subset of $L$, but $\{ (1) \}$ is not a  $\mathfrak{X}$-multiplicatively closed subset of $L$ for $\mathfrak{X} = \{(0), ~(6) \}$.
	
\end{rem}

\begin{lem}\label{4.29.}  Let $i$ be a proper element of a  $c$-lattice $L$ and $\mathfrak{X}$ be an $M$-closed subset of $L$. If $i$ is an $\mathfrak{X}$-element of $L$, then $L_* \setminus (i]$ is an $\mathfrak{X}$-multiplicatively closed subset of $L$. The converse is true if either $\mathfrak{X} = (j]$ is an $M$-closed subset of a $c$-lattice $L$ or $L$ is a compact lattice.
\end{lem}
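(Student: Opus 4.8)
The plan is to handle the two directions separately: the forward implication holds unconditionally, while the converse will use exactly one of the two stated hypotheses at a single point.

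For the forward direction I would assume $i$ is an $\mathfrak{X}$-element, put $A = L_* \setminus (i]$, and verify the three requirements of Definition \ref{4.28.} in turn. Non-emptiness is immediate: $L$ is $1$-compact so $1 \in L_*$, and $i$ proper gives $1 \not\leq i$, whence $1 \in A$. For the containment $(L_* \setminus \mathfrak{X}) \subseteq A$ I would invoke Lemma \ref{4.10.}, which yields $(i] \subseteq \mathfrak{X}$ and therefore $L_* \setminus \mathfrak{X} \subseteq L_* \setminus (i] = A$. For the closure condition, take $a_1 \in L_* \setminus \mathfrak{X}$ and $a_2 \in A$; the product $a_1 \cdot a_2$ is compact because in a $c$-lattice the product of two compact elements is compact, and if $a_1 \cdot a_2 \leq i$ then the $\mathfrak{X}$-element property (using $a_1 \notin \mathfrak{X}$) would force $a_2 \leq i$, contradicting $a_2 \in A$. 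Hence $a_1 \cdot a_2 \in A$ and $A$ is $\mathfrak{X}$-multiplicatively closed.

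For the converse I would first isolate the uniform computation that works in both cases. Assuming $A = L_* \setminus (i]$ is $\mathfrak{X}$-multiplicatively closed, suppose $a, b \in L_*$ with $a \cdot b \leq i$ and $a \notin \mathfrak{X}$; then $a \in L_* \setminus \mathfrak{X}$, and were $b \not\leq i$ we would have $b \in A$, so closure would give $a \cdot b \in A = L_* \setminus (i]$, contradicting $a \cdot b \leq i$. Hence $b \leq i$. This establishes the $\mathfrak{X}$-element condition for compact $a,b$.

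The hard part is promoting this compact-only statement to the full definition of an $\mathfrak{X}$-element, which quantifies over all $a, b \in L$, and this is the sole place where the extra hypothesis is needed. If $L$ is compact then $L = L_*$ and there is nothing further to do. If instead $\mathfrak{X} = (j]$, then the compact statement above is precisely condition $(*)$ of Lemma \ref{2.8.}, which for a principal $M$-closed set is equivalent to being an $\mathfrak{X}$-element, so the conclusion follows at once. I expect the only genuinely delicate point to be recognizing that this compact-to-full bridging can fail without one of these two conditions, which is exactly what forces the converse to be stated conditionally.
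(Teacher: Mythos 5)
Your proof is correct and follows essentially the same route as the paper's: Lemma \ref{4.10.} for the containment $(L_* \setminus \mathfrak{X}) \subseteq L_* \setminus (i]$, a direct contradiction argument for the closure property, and then Lemma \ref{2.8.} (resp.\ $L = L_*$) to pass from the compact-only condition to the full $\mathfrak{X}$-element definition in the principal (resp.\ compact) case. Your explicit remarks that $a_1 \cdot a_2$ stays compact and that $A$ is nonempty via $1 \in L_*$ are small points the paper leaves implicit, but the argument is the same.
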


\begin{proof} Suppose that $i$ is an $\mathfrak{X}$-element of $L$. By Lemma \ref{4.10.}, $(L_* \setminus \mathfrak{X}) \subseteq (L_* \setminus (i])$. Let $a \in (L_* \setminus \mathfrak{X})$ and  $b \in (L_* \setminus (i])$. We claim that $a \cdot b \in (L_* \setminus (i]) $. Suppose on the contrary that $a \cdot b \notin (L_* \setminus (i])$. So $ a \cdot b \leq i$. Since   $i$ is an $\mathfrak{X}$-element of $L$ and $a \notin \mathfrak{X}$, we get $b \leq i$, a contradiction to $b \in (L_* \setminus (i])$. Thus $a \cdot b \in (L_* \setminus (i]) $. Consequently, $L_* \setminus (i]$ is an $\mathfrak{X}$-multiplicatively closed subset of $L$.

	Conversely, suppose that $\mathfrak{X} = (j]$ is an $M$-closed subset of a $c$-lattice $L$ and  $L_* \setminus (i]$ is an \linebreak $\mathfrak{X}$-multiplicatively closed subset of $L$. Therefore $\bigl(L_* \setminus (j]\bigr) \subseteq \bigl(L_* \setminus (i]\bigr)$. In view of Lemma \ref{2.8.}, to show that $i$ is an $\mathfrak{X}$-element of $L$, it is enough to show that for $a, b \in L_*$ such that $a \cdot b \leq i$ with $a \not \in \mathfrak{X}=(j]$,  we have $b \leq i$. If $b \in (L_* \setminus(i])$, then as $(L_* \setminus (i])$ is an $\mathfrak{X}$-multiplicatively closed subset of $L$, we get $a \cdot b \in (L_* \setminus (i])$, a contradiction to $a \cdot b \leq i$. Therefore $i$ is an $\mathfrak{X}$-element of $L$.
	
	If $L$ is a compact lattice and $\mathfrak{X}$ be an $M$-closed subset of $L$, then the converse follows similarly.	
\end{proof}

\begin{thm}\label{2.30.} 
	Let $j$ be a proper element of $c$-lattice $L$ and  $\mathfrak{X} = (j]$  be an $M$-closed subset of $L$.  Suppose $a \in L$ and $ t \nleq
	a$ for all $t\in A$, where $A$ is an $\mathfrak{X}$-multiplicatively closed subset. Then there is an $\mathfrak{X}$-element $i$ of $L$ such that $a \leq
	i$ and $i$ is maximal with respect to $t \nleq i$ for all $t \in A$.
\end{thm}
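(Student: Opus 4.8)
The plan is to run the standard maximal-element (Zorn's Lemma) argument, adapted to the $\mathfrak{X}$-setting. First I would introduce the poset
\[
\Sigma = \{b \in L \mid a \leq b \text{ and } t \nleq b \text{ for all } t \in A\},
\]
partially ordered by the order of $L$. The hypothesis that $t \nleq a$ for every $t \in A$ guarantees $a \in \Sigma$, so $\Sigma$ is non-empty.

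Next I would verify the hypothesis of Zorn's Lemma, namely that every chain $\mathcal{C}$ in $\Sigma$ has an upper bound in $\Sigma$. The natural candidate is $u = \bigvee \mathcal{C}$, which certainly satisfies $a \leq u$. The only thing needing a check is $t \nleq u$ for each $t \in A$, and here compactness does the work: since $A \subseteq L_*$, each such $t$ is compact, so $t \leq u = \bigvee \mathcal{C}$ would force $t \leq b_1 \vee \cdots \vee b_n$ for finitely many members of $\mathcal{C}$; as $\mathcal{C}$ is a chain this join equals its largest term $b_k \in \mathcal{C}$, contradicting $b_k \in \Sigma$. Hence $u \in \Sigma$, and Zorn's Lemma yields a maximal element $i$ of $\Sigma$. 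Because $A$ is non-empty and every $t \in A$ satisfies $t \nleq i$ while $t \leq 1$, the element $i$ cannot equal $1$, so $i$ is proper; moreover $i$ is maximal with respect to $a \leq i$ and $t \nleq i$ for all $t \in A$, as required.

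The main work is to show this maximal $i$ is an $\mathfrak{X}$-element. Since $\mathfrak{X} = (j]$, by Lemma \ref{2.8.} it suffices to verify condition $(*)$ on compact elements, so I would take $c, d \in L_*$ with $c \cdot d \leq i$ and $c \notin \mathfrak{X}$ (equivalently $c \nleq j$), and aim to prove $d \leq i$. Suppose not; then $d \nleq i$ and hence $i < i \vee d$, so maximality of $i$ forces $i \vee d \notin \Sigma$, yielding some $t \in A$ with $t \leq i \vee d$. This is the point where the defining property of the $\mathfrak{X}$-multiplicatively closed subset enters: since $c \in L_* \setminus \mathfrak{X}$ and $t \in A$, we obtain $c \cdot t \in A$, and $c \cdot t$ is compact because $L$ is a $c$-lattice. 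Then, using $c \cdot i \leq i$ and $c \cdot d \leq i$ together with distributivity and monotonicity of the multiplication,
\[
c \cdot t \leq c \cdot (i \vee d) = (c \cdot i) \vee (c \cdot d) \leq i,
\]
so $c \cdot t \in A$ and $c \cdot t \leq i$, contradicting $i \in \Sigma$. Therefore $d \leq i$, and $i$ is an $\mathfrak{X}$-element.

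The delicate point I expect to be the main obstacle is exactly this last contradiction: it is essential that the factor $c$ avoiding $\mathfrak{X}$ be \emph{compact}, so that the closure property $c \cdot t \in A$ of the $\mathfrak{X}$-multiplicatively closed set is applicable. This is precisely why the reduction to compact elements via Lemma \ref{2.8.}, which is available thanks to the assumption $\mathfrak{X} = (j]$, must be performed at the outset rather than working with arbitrary $c, d \in L$.
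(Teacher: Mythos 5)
Your proposal is correct and follows essentially the same route as the paper: the same Zorn's Lemma argument on the poset of elements above $a$ avoiding $A$ (with compactness of the members of $A$ securing chain bounds), followed by the same reduction to compact elements via Lemma \ref{2.8.} and the same use of the closure property of the $\mathfrak{X}$-multiplicatively closed set to derive a contradiction. The only cosmetic difference is that you locate the element $t \in A$ below $i \vee d$ and compute $c \cdot t \leq (c\cdot i)\vee(c\cdot d) \leq i$, whereas the paper finds $t_1 \in A$ below $(i:c)$ and uses $c \cdot t_1 \leq i$; both are the same idea.
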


\begin{proof} Let $R = \{c \in L \; | \ a \leq c \text{ and } \; t \nleq c$ for all $t \in A \}$. Clearly, $a \in R$ and hence $R$ is a poset under the induced partial order of $L$. Let $\mathcal{C}$ be a chain in $R$ and $w = \bigvee \{d \ | \ d \in \mathcal{C}\}$. We claim that $w \in R$. Suppose on the contrary that $w \notin R$, that is, $t \leq w$ for some $t \in A$. Since  $t\in A \subseteq L_*$ is compact, we have $t \leq d_1 \vee d_2 \vee \dots \vee d_n$ for some $d_1, d_2,  \cdots,  d_n \in \mathcal{C}$. As $\mathcal{C}$ is a chain we must have $d_1 \vee d_2 \vee \dots \vee d_n = d_i$ for some $i$,  where $1 \leq i \leq n$. Thus $t \leq d_i$, a contradiction. Thus $w \in R$. Hence by Zorn's lemma, there is a maximal element $i$ of $R$. Hence $a \leq i$ and $t \not\leq a$ for all $t \in A$.
	
	In view of Lemma \ref{2.8.}, to prove $i$ is an  $\mathfrak{X}$-element of $L$, assume that  $x , y \in L_*$ such that $ x \cdot y \leq i$  with $x \notin \mathfrak{X}=(j]$. Suppose that  $y \nleq j$. Clearly,  $y \leq (i : x)$ and, if $i = (i : x)$, then $y \leq i$ and we are done. Hence assume that $i < (i:x)$. Since $i$ is a maximal element of $R$, $(i:x) \not \in R$. Hence, there exists a compact element $t_1 \in A$ such that  $t_1 \leq ( i : x)$, that is, $x \cdot t_1 \leq i$. But $x \cdot t_1 \in A$, as $A$ is an $\mathfrak{X}$-multiplicatively closed subset, $x \in (L_* \setminus (j])$ and $t_1 \in A$. Thus there exist an element $t_2 = x \cdot t_1\in A$ such that $t_2  \leq i$,  a contradiction to $i \in R$. Hence $i$ is an $\mathfrak{X}$-element of $L$.\end{proof}

\begin{thm}\label{9.30.} 
	Let $L$ be a compact lattice and $\mathfrak{X}$ be an $M$-closed subset of $L$. Suppose $a \in L$ and $ t \nleq
	a$ for all $t\in A$, where $A$ is an $\mathfrak{X}$-multiplicatively closed subset. Then there is an $\mathfrak{X}$-element $i$ of $L$ such that $a \leq
	i$ and $i$ is maximal with respect to $t \nleq i$ for all $t \in A$.
\end{thm}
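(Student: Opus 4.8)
The plan is to mirror the proof of Theorem \ref{2.30.} almost verbatim, exploiting the fact that in a compact lattice every element is compact, so $L = L_*$. This is precisely what lets us bypass the reduction to compact elements (Lemma \ref{2.8.}) that was needed there, and it also makes the hypothesis $x \notin \mathfrak{X}$ directly yield $x \in L_* \setminus \mathfrak{X}$ for an arbitrary $M$-closed set $\mathfrak{X}$, not merely one of the form $(j]$.

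First I would set up the poset $R = \{c \in L \mid a \leq c \text{ and } t \nleq c \text{ for all } t \in A\}$, noting that $a \in R$ by hypothesis, so $R$ is nonempty. To apply Zorn's Lemma, I take any chain $\mathcal{C}$ in $R$ and put $w = \bigvee\{d \mid d \in \mathcal{C}\}$. If $w \notin R$, then $t \leq w$ for some $t \in A$; since $L$ is compact, $t$ is a compact element, so $t \leq d_1 \vee \cdots \vee d_n$ for finitely many $d_k \in \mathcal{C}$, and as $\mathcal{C}$ is a chain this supremum equals some $d_j \in \mathcal{C}$, giving $t \leq d_j$ and contradicting $d_j \in R$. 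Hence $w \in R$ is an upper bound of $\mathcal{C}$, and Zorn's Lemma produces a maximal element $i$ of $R$. By construction $a \leq i$ and $t \nleq i$ for all $t \in A$, so the maximality clause of the statement holds once we know $i$ is an $\mathfrak{X}$-element.

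It then remains to show $i$ is an $\mathfrak{X}$-element. Since $L = L_*$, I may take arbitrary $x, y \in L$ with $x \cdot y \leq i$ and $x \notin \mathfrak{X}$, and aim to prove $y \leq i$. Clearly $y \leq (i:x)$, so if $i = (i:x)$ we are done. Otherwise $i < (i:x)$, and maximality of $i$ in $R$ forces $(i:x) \notin R$; since $a \leq i \leq (i:x)$, this means some $t_1 \in A$ satisfies $t_1 \leq (i:x)$, i.e.\ $x \cdot t_1 \leq i$. Here is the crux: because $L$ is compact we have $x \in L_* \setminus \mathfrak{X}$, so the defining property of the $\mathfrak{X}$-multiplicatively closed set $A$ gives $x \cdot t_1 \in A$; but then $t_2 := x \cdot t_1 \in A$ satisfies $t_2 \leq i$, contradicting $i \in R$. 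Thus $i = (i:x)$ and $y \leq i$, so $i$ is an $\mathfrak{X}$-element.

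I expect no serious obstacle here, since the argument is structurally the one already used for Theorem \ref{2.30.}. The only point requiring care is the passage from ``$(i:x) \notin R$'' to ``some $t_1 \in A$ lies below $(i:x)$'', which relies on $(i:x)$ still lying above $a$; and the conceptual substitution to note is that compactness of $L$ plays exactly the role that the restriction $\mathfrak{X} = (j]$ together with Lemma \ref{2.8.} played before, allowing us to argue with arbitrary elements rather than only compact ones.
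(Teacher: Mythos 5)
Your proof is correct and is exactly what the paper intends: its proof of Theorem \ref{9.30.} is simply ``follows on similar lines as that of Theorem \ref{2.30.}'', and you have carried out that adaptation faithfully, correctly identifying that $L=L_*$ in a compact lattice is what replaces the appeal to Lemma \ref{2.8.} and the restriction $\mathfrak{X}=(j]$. No issues to report.
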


\begin{proof} Proof follows on similar lines as that of Theorem \ref{2.30.}. \end{proof}

\section{Applications of  $\mathfrak{X}$-Elements}

As already mentioned in the introduction, there is a unifying pattern in the results of $J$-ideals, $n$-ideals and $r$-ideals of a commutative ring with unity.

In this section, we prove these results of $J$-ideals, $n$-ideals and $r$-ideals  by suitably replacing the set $\mathfrak{X}$ in multiplicative lattices. Hence most of the results of the papers \cite{KB}, \cite{M} and \cite{TKO} becomes the corollaries of our results. 

First, we quote the definitions of 	$J$-ideals, $n$-ideals and $r$-ideals using the sets $Z(R)$, $N(R)$ and $J(R)$ ($Z(L)$, $N(L)$ and $J(L)$), the set of zero-divisors, the nil-radical  and the Jacobson radical  of a commutative ring $R$ (multiplicative lattice $L$) respectively.

\begin{defn}\label{8.1.}  A proper ideal $I$ of a commutative ring $R$ with unity  is called: 
	\begin{itemize}
		\item $r$-ideal, if  $ab \in I$ with $ann_R(a)=(0)$  implies $b~ \in I$ for all $a, ~b ~\in R$ (see Mohamadian \cite{M}).
		
		\item $n$-ideal, if  $ab \in I$ with $a ~\not \in \sqrt{0}$  implies $b~ \in I$ for all $a, ~b ~\in R$ (see U. Tekir et al. \cite{TKO}).
		
		\item $J$-ideal, if  $ab \in I$ with $a ~\not \in J(R)$  implies $b~ \in I$ for all $a, ~b ~\in R$ (see Khashan and  Bani-Ata \cite{KB}).
	\end{itemize}
\end{defn}

Analogously, we define the concepts of $r$-element, $n$-element and $J$-element in multiplicative lattices.	

\begin{defn}\label{8.1.}  A proper element $i$ of a multiplicative lattice $L$ is called: 
	\begin{itemize}
		\item $r$-element, if  $a \cdot b ~\leq i$ with $a ~\not \in Z(L)$  implies $b~ \leq i$ for all $a, ~b ~\in L_*$.
		
		\item $n$-element, if  $a \cdot b ~\leq i$ with $a ~\notin \bigl(\sqrt{0}\ \!\bigr]$  implies $b~ \leq i$ for all $a, ~b ~\in L_*$.
		
		\item $J$-element, if  $a \cdot b ~\leq i$ with $a ~\not \in \bigl(J(L)\bigr]$  implies $b~ \leq i$ for all $a, ~b ~\in L_*$.
	\end{itemize}
\end{defn}

We quote the following three results to prove that a proper ideal $I$ of a commutative ring $R$ with unity is an $r$-ideal, $n$-ideal and $J$-ideal if and only if it is an $r$-element, $n$-element and $J$-element of the multiplicative lattice $Id(R)$, the set of all ideals of $R$, respectively.

\begin{thm}[{Mohamadian \cite[Lemma 2.5]{M}}] \label{8.11.}
	Let $R$ be a commutative ring with unity and $I$ be a proper ideal of $R$. Then 
	$I$ is an $r$-ideal if and only if whenever $J$ and $K$ are ideals of $R$ with $J \not\subseteq Z(R) $ and $JK \subseteq I$, then $K \subseteq I$.
\end{thm}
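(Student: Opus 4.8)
The plan is to prove both implications directly from the definition of an $r$-ideal, exploiting the elementary equivalence $ann_R(a) = (0) \iff a \notin Z(R)$ valid for elements of a commutative ring with unity. Indeed, $a \in Z(R)$ means $ay = 0$ for some $y \neq 0$, which is precisely the assertion that $ann_R(a) \neq (0)$; hence the hypothesis ``$a \notin Z(R)$'' appearing in the ideal-theoretic condition matches the hypothesis ``$ann_R(a) = (0)$'' in the definition of an $r$-ideal. Recording this equivalence first lets the rest of the argument proceed by pure bookkeeping.

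For the ``only if'' direction, I would assume $I$ is an $r$-ideal and take ideals $J, K$ with $J \not\subseteq Z(R)$ and $JK \subseteq I$. Since $J \not\subseteq Z(R)$, there is an element $a \in J$ with $a \notin Z(R)$. For an arbitrary $b \in K$, the product $ab$ lies in $JK \subseteq I$, so $ab \in I$ with $a \notin Z(R)$; the defining property of an $r$-ideal then forces $b \in I$. As $b \in K$ was arbitrary, this gives $K \subseteq I$.

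For the ``if'' direction, I would assume the ideal-theoretic condition and verify the element-wise definition. Given $a, b \in R$ with $ab \in I$ and $ann_R(a) = (0)$, equivalently $a \notin Z(R)$, I would pass to the principal ideals $J = (a)$ and $K = (b)$. Because $R$ has a unity, $a \in (a)$, so $J = (a) \not\subseteq Z(R)$, and $JK = (ab) \subseteq I$. The hypothesis then yields $K = (b) \subseteq I$, whence $b \in I$, establishing that $I$ is an $r$-ideal.

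I do not anticipate a serious obstacle: both directions are short and symmetric in spirit. The only point requiring genuine care is the translation between the annihilator condition and membership in $Z(R)$, together with the observation that having a unity guarantees $a \in (a)$, so that the chosen principal ideal indeed escapes $Z(R)$; without this, the reduction to principal ideals in the ``if'' direction would fail.
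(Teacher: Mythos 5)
Your proof is correct. Note that the paper itself does not prove this statement at all: it is quoted verbatim from Mohamadian \cite[Lemma 2.5]{M} as a known result, so there is no in-paper argument to compare against. Your argument is the standard one and is sound: the equivalence $ann_R(a)=(0)\iff a\notin Z(R)$ is the right bridge between the two formulations, the forward direction correctly picks $a\in J\setminus Z(R)$ and applies the elementwise definition to each $b\in K$, and the reverse direction correctly reduces to the principal ideals $J=(a)$, $K=(b)$ using $a=1\cdot a\in(a)$ and $(a)(b)=(ab)\subseteq I$.
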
 

\begin{thm}[{U. Tekir et al. \cite[Theorem 2.7]{TKO}}] \label{6.11.}
	Let $R$ be a commutative ring with unity  and $I$ a proper ideal of $R$. Then the following are equivalent:
	\begin{enumerate}
		\item $I$ is an $n$-ideal of $R$.
		\item $I=(I:a)$ for every $a \notin \sqrt {0} $.
		\item For ideals $J$ and $K$ of $R$, $JK \subseteq I$ with $J \cap (R - \sqrt {0}) \not= \emptyset$ implies $K \subseteq I$.
	\end{enumerate}
\end{thm}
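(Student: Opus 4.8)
The plan is to establish the three equivalences by running the cyclic chain $(1) \Rightarrow (2) \Rightarrow (3) \Rightarrow (1)$, since each individual implication is a short element-chase once one records the translation that the hypothesis $J \cap (R - \sqrt{0}) \neq \emptyset$ in $(3)$ is nothing but $J \not\subseteq \sqrt{0}$, i.e. that $J$ contains a witness lying outside the nilradical. Throughout, the only structural facts needed are that $\sqrt{0}$ is the ideal of nilpotents of $R$ and that $I$ is an ideal, so that membership of a product $ab$ in $I$ is equivalent to containment of the principal ideal $(ab) = (a)(b)$ in $I$.

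For $(1) \Rightarrow (2)$, fix $a \notin \sqrt{0}$. The inclusion $I \subseteq (I:a)$ holds for every $a$, so it suffices to prove $(I:a) \subseteq I$: if $x \in (I:a)$ then $xa \in I$ with $a \notin \sqrt{0}$, and the $n$-ideal property of $I$ forces $x \in I$. For $(2) \Rightarrow (3)$, suppose $JK \subseteq I$ with $J \not\subseteq \sqrt{0}$, and choose $a \in J$ with $a \notin \sqrt{0}$. For an arbitrary $b \in K$ we have $ab \in JK \subseteq I$, hence $b \in (I:a) = I$ by hypothesis $(2)$; as $b$ was arbitrary this gives $K \subseteq I$.

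The step $(3) \Rightarrow (1)$ is where the passage between elements and ideals must be handled, and it is the only place deserving attention. Given $ab \in I$ with $a \notin \sqrt{0}$, I would set $J = (a)$ and $K = (b)$, so that $JK = (ab) \subseteq I$ because $I$ is an ideal. Since $a \in J$ and $a \notin \sqrt{0}$, we have $J \cap (R - \sqrt{0}) \neq \emptyset$, so $(3)$ applies and yields $K = (b) \subseteq I$, whence $b \in I$; this is exactly the defining condition of an $n$-ideal. The main (mild) obstacle is thus purely bookkeeping: recognizing that the element-level condition in $(1)$ is recovered from the ideal-level condition in $(3)$ by specializing to principal ideals, and that no primeness or further property of $\sqrt{0}$ beyond its being an honest ideal is required for the single-element witness in $J$ to suffice.
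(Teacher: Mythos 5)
Your cyclic argument $(1)\Rightarrow(2)\Rightarrow(3)\Rightarrow(1)$ is correct and complete; in particular the passage $(3)\Rightarrow(1)$ via $J=(a)$, $K=(b)$ and $(a)(b)=(ab)\subseteq I$ is handled properly. Note that the paper itself gives no proof of this statement --- it is quoted verbatim from U. Tekir et al.\ \cite[Theorem 2.7]{TKO} as an imported tool --- so there is no in-paper argument to compare against; your proof is the standard one for that cited result.
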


\begin{thm}[{H. A. Khashan et al. \cite[Proposition 2.10]{KB}}] \label{7.11.}
	Let $R$ be a commutative ring with unity  and $I$ a proper ideal of $R$. Then the following are equivalent:
	\begin{enumerate}
		\item $I$ is a $J$-ideal of $R$.
		\item $I=(I:a)$ for every $a \not \in J(R)$.
		\item For ideals $A$ and $B$ of $R$, $AB \subseteq I$ with $A \nsubseteq J(R)$ implies $B \subseteq I$.
	\end{enumerate}
\end{thm}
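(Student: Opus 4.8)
The plan is to prove the three statements equivalent by running the cycle $(1)\Rightarrow(2)\Rightarrow(3)\Rightarrow(1)$ entirely inside $R$. This is the ring counterpart of the lattice equivalences in Lemma \ref{4.13.} and Lemma \ref{4.17.}, with the distinguished set playing the role of $J(R)$, so I expect the same bookkeeping to go through verbatim after translating ``$\leq$'' to ``$\subseteq$'' and compact elements to ring elements.

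First I would prove $(1)\Rightarrow(2)$. Fix $a\notin J(R)$. The inclusion $I\subseteq(I:a)$ is automatic because $I$ is an ideal, so only $(I:a)\subseteq I$ uses the hypothesis: if $x\in(I:a)$ then $ax\in I$, and since $I$ is a $J$-ideal and $a\notin J(R)$, the defining implication forces $x\in I$. Next, for $(2)\Rightarrow(3)$, given ideals $A,B$ with $AB\subseteq I$ and $A\nsubseteq J(R)$, I would pick $a\in A\setminus J(R)$; then every $b\in B$ satisfies $ab\in AB\subseteq I$, so $b\in(I:a)=I$ by (2), giving $B\subseteq I$. Finally, for $(3)\Rightarrow(1)$, suppose $ab\in I$ with $a\notin J(R)$; I would pass to the principal ideals $A=(a)$ and $B=(b)$, note that $A\nsubseteq J(R)$ precisely because $a\notin J(R)$, and observe $AB=(ab)\subseteq I$ since $ab\in I$ and $I$ is an ideal. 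Then (3) yields $B\subseteq I$, so in particular $b\in I$, which is exactly the $J$-ideal condition.

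The whole argument is routine, so there is no genuine obstacle; the one step deserving a moment of care is $(3)\Rightarrow(1)$, where the element-wise definition of a $J$-ideal must be converted into the ideal-theoretic statement by replacing $a,b$ with the principal ideals they generate and verifying $(a)(b)\subseteq I$. Since $(a)(b)=Rab\subseteq I$ whenever $ab\in I$, and $(a)\subseteq J(R)$ would force $a\in J(R)$, both checks are immediate, and the cycle closes.
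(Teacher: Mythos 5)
Your proof is correct. The paper itself only quotes this result from Khashan and Bani-Ata \cite{KB} without supplying a proof, and your cyclic argument $(1)\Rightarrow(2)\Rightarrow(3)\Rightarrow(1)$ is the standard one, mirroring exactly the lattice-theoretic arguments the paper does give for Lemmas \ref{4.13.} and \ref{4.17.}, so there is nothing to add.
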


\begin{thm} \label{8.36} Let $R$ be a Noetherian ring with unity. Then $I$ is an $r$-ideal of $R$ if and only if $I$ is an $r$-element of the multiplicative lattice $L= Id(R)$, where $Id(R)$ is the  ideal lattice of $R$. \end{thm}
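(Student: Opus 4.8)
The plan is to read both notions through the correspondence between $Id(R)$ and $R$: products become ideal products, the compact elements $L_*$ are exactly the finitely generated ideals, and for an ideal $A$ one has $A \in Z(L)$ precisely when $ann_R(A) \neq (0)$. Indeed, $A \in Z(L)$ means $A \cdot B = (0)$ for some nonzero ideal $B$, which says exactly $B \subseteq ann_R(A)$; hence $A \notin Z(L)$ translates to the condition $ann_R(A) = (0)$. With this dictionary the two statements become almost formally identical, and the only real content is comparing ``$ann_R(A) = (0)$'' with ``$A$ contains a regular element''.

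I would dispatch the easy implication first. Suppose $I$ is an $r$-element of $L$. Given $a, b \in R$ with $ab \in I$ and $ann_R(a) = (0)$, pass to the principal (hence compact) ideals $(a)$ and $(b)$. Then $(a)(b) = (ab) \subseteq I$, and $ann_R((a)) = ann_R(a) = (0)$ gives $(a) \notin Z(L)$, so the $r$-element property yields $(b) \subseteq I$, that is, $b \in I$. Thus $I$ is an $r$-ideal, and this direction uses no finiteness hypothesis at all.

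For the converse, suppose $I$ is an $r$-ideal and take finitely generated ideals $A, B$ with $A \cdot B \subseteq I$ and $A \notin Z(L)$, i.e. $ann_R(A) = (0)$; I must deduce $B \subseteq I$. The crucial step, and the sole place the Noetherian hypothesis enters, is the claim that $ann_R(A) = (0)$ forces $A \not\subseteq Z(R)$. I would argue the contrapositive: if $A \subseteq Z(R)$ then, since $R$ is Noetherian, $Z(R) = \bigcup_{P \in \operatorname{Ass}(R)} P$ with $\operatorname{Ass}(R)$ finite, so by prime avoidance $A \subseteq P$ for some $P = ann_R(x)$ with $x \neq 0$; this gives $Ax = (0)$, so $0 \neq x \in ann_R(A)$, contradicting $ann_R(A) = (0)$. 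Once $A \not\subseteq Z(R)$ is secured, Mohamadian's characterization (Theorem \ref{8.11.}), applied with $J = A$ and $K = B$, delivers $B \subseteq I$.

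The main obstacle is exactly this Noetherian lemma linking the lattice-theoretic condition $ann_R(A) = (0)$ to the ring-theoretic condition that $A$ meet the regular elements; the finiteness of $\operatorname{Ass}(R)$ together with prime avoidance is precisely what makes the element-level definition of $r$-ideal and the compact-element definition of $r$-element coincide. Everything else is the routine bookkeeping of identifying a generator with the principal ideal it generates, and checking that annihilators of elements and of the ideals they generate agree.
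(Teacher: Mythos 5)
Your proposal is correct and follows essentially the same route as the paper: both directions use the dictionary $A\notin Z(L)\Leftrightarrow ann_R(A)=(0)$, the Noetherian hypothesis enters only through the decomposition $Z(R)=\bigcup_{P\in \operatorname{Ass}(R)}P$ plus prime avoidance to get $A\not\subseteq Z(R)$, and the conclusion is drawn from Mohamadian's characterization (Theorem \ref{8.11.}); the reverse direction via principal ideals is likewise identical. The only cosmetic difference is that you restrict to finitely generated (compact) ideals in the forward direction, exactly matching the stated definition of $r$-element, whereas the paper argues for arbitrary ideals $J,K$.
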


\begin{proof}		
	
	Suppose that $I$ is an $r$-ideal of $R$. Let $J, K$ be any ideals of $R$ such that $J \cdot K \leq I$ in  $L$ with $J \notin Z(L)$, that is, $ann_L(J) = 0_L$, where $(0_R)$ is the least element of $L$, denoted by $0_L$. We claim that  $ann_R(J)  =  (0_R)$. Suppose on the contrary that $(0_R\not =  ) x \in ann_R(J) $. Hence $(x)J=(0_R)$, a contradiction to the $ann_L(J)=0_L$. Hence $ann_R(J)=(0_R)$. 
	
	Now, we prove that $J \not\subseteq Z(R)$. Suppose on the contrary that $J \subseteq Z(R)$. Since $R$ is Noetherian,  $J \subseteq  Z(R)=\bigcup_{i=1}^{n}P_i$, where $P_i$'s are associate primes. By Prime Avoidance Theorem $J \subseteq P_k$ for some $k$. Since $P_k$ is an associated prime, we have $P_k=0:x$ for some $x\in R$. But this will contradicts the fact that $ann_R(J)=(0)$. Hence $J \not\subseteq Z(R)$. By Theorem \ref{8.11.}, $K \subseteq I$, i.e., $K \leq I$. Thus $I$ is an $r$-element of $L$.

	Conversely, suppose that  $I$ is a $r$-element of $L$. Let $a, b \in R$ such that $a \cdot b \in I$ with $ann_R (a) = (0_R)$. We claim that $b \in I$. Since  $(a \cdot b) = (a) \cdot (b) \subseteq I$, we have $a' \cdot b' \leq I$ in $L$, where $a' = (a),\ b' = (b)$.  Clearly, $a' \notin Z(L)$. Hence $b' \leq I$, i.e.,  $b \in I$. Thus $I$ is an $r$-ideal of $R$.  \end{proof}
\begin{rem}
	From the proof of Theorem \ref{8.36}, it is clear that every $r$-element of $Id(R)$ is an $r$-ideal of $R$. However, for the converse we need the assumption that a ring is Noetherian. It should be noted that  if we replace ``Noetherian ring" by ``ring satisfies strongly annihilator condition" still the result is true. By strong annihilator condition, we mean, for given ideal $I$ of $R$, there exists $a \in I$ such that $ann_R(I)=ann_R(a)$. 
	
	Further, we are unable to find an example to show that the condition that the ring is  Noetherian or satisfies strongly annihilator condition is necessary to prove the above Theorem \ref{8.36}. Hence we raise the following question.
\end{rem}

\begin{que}\label{que}
	Let $I$ be an $r$-ideal of a commutative ring $R$ with unity. Is $I$ an $r$-element of $Id(R)$? 
\end{que}

\begin{thm} \label{2.36} Let $R$ be a commutative ring with unity. Then $I$ is an $n$-ideal of $R$ with unity if and only if $I$ is a $n$-element of multiplicative lattice $L = Id(R)$, where $Id(R)$ is  the ideal lattice of $R$. \end{thm}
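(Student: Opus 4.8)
The plan is to translate the ring-theoretic statement into the lattice $L=Id(R)$ via the correspondence between elements of $R$ and principal ideals, and to invoke the characterisation of $n$-ideals in Theorem \ref{6.11.}. Two preliminary identifications drive the whole argument. First, the compact elements $L_*$ of $Id(R)$ are exactly the finitely generated ideals of $R$, and in particular every principal ideal $(a)$ is compact. Second, the radical $\sqrt{0}$ computed in $L$ coincides with the nilradical $\sqrt{0}$ of $R$: the join in $Id(R)$ of all nilpotent finitely generated ideals is precisely the ideal $\{r\in R : r^n=0 \text{ for some } n\in\mathbb{N}\}$. Consequently, for any $a\in R$ one has $(a)\nleq\sqrt{0}$ in $L$ if and only if $a\notin\sqrt{0}$ in $R$, that is, $(a)\notin\bigl(\sqrt{0}\,\bigr]$ iff $a\notin\sqrt{0}$. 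Note also that $I$ is a proper ideal iff it is a proper element of $L$, so the two definitions match on that point as well.

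For the forward implication, I would assume $I$ is an $n$-ideal of $R$ and verify the $n$-element condition. Let $J,K\in L_*$ with $J\cdot K\leq I$ and $J\notin\bigl(\sqrt{0}\,\bigr]$. Unwinding the notation, $J\cdot K\leq I$ means $JK\subseteq I$, while $J\notin\bigl(\sqrt{0}\,\bigr]$ means $J\not\subseteq\sqrt{0}$, i.e. $J\cap(R\setminus\sqrt{0})\neq\emptyset$. By the implication $(1)\Rightarrow(3)$ of Theorem \ref{6.11.}, it follows that $K\subseteq I$, that is $K\leq I$. Hence $I$ is an $n$-element of $L$. In fact one does not even need the restriction to compact $J,K$ here, since Theorem \ref{6.11.} applies to arbitrary ideals.

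For the converse, I would assume $I$ is an $n$-element of $L$ and check the defining property of an $n$-ideal directly on elements of $R$. Let $a,b\in R$ with $ab\in I$ and $a\notin\sqrt{0}$. Pass to the principal ideals $(a),(b)$, which lie in $L_*$. Since $(a)(b)=(ab)\subseteq I$, we have $(a)\cdot(b)\leq I$ in $L$, and by the second identification above $a\notin\sqrt{0}$ gives $(a)\notin\bigl(\sqrt{0}\,\bigr]$. The $n$-element hypothesis then yields $(b)\leq I$, i.e. $b\in I$. Thus $I$ is an $n$-ideal of $R$, completing the equivalence.

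The argument is essentially a dictionary translation, so no deep obstacle arises; the only points requiring care — and the steps I would state most explicitly — are the identification of $\sqrt{0}$ in $Id(R)$ with the nilradical of $R$ and the observation that testing the $n$-element condition on principal ideals suffices to recover the element-wise $n$-ideal condition. This is precisely why the quoted Theorem \ref{6.11.}, which rephrases the $n$-ideal property in terms of products of ideals, dovetails so cleanly with the lattice definition.
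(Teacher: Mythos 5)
Your proposal is correct and follows essentially the same route as the paper's proof: the forward direction reduces the $n$-element condition on (compact, i.e.\ finitely generated) ideals to Theorem \ref{6.11.} via the identification of $\sqrt{0}$ in $Id(R)$ with the nilradical of $R$, and the converse passes from elements $a,b$ to the principal ideals $(a),(b)$. The only cosmetic difference is that the paper routes the identification through ``$J\nleq\sqrt{0_L}$ implies $J^n\neq 0_L$ for all $n$'' rather than stating the equality of the two radicals outright, which amounts to the same thing.
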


\begin{proof}		
	
	Suppose that $I$ is an $n$-ideal of $R$. Let $J, K$ be any finitely generated ideals of $R$ such that $J \cdot K \leq I$ with $J \nleq \sqrt{0_L}$ in $L$. It is known that finitely generated ideals of $R$ are compact elements of $Id(R)$. Since $J \nleq \sqrt{0_L}$, we get $J^n \not = 0_L=(0_R)$ for every $n \in \mathbb{N}$.  Hence $J \cap (R \setminus \sqrt{0_R}) \not = \emptyset$. By Theorem \ref{6.11.}, $K \subseteq I$, i.e., $K \leq I$. Therefore $I$ is a $n$-element of $L$.

	Conversely, suppose that  $I$ is a $n$-element of $L$. Let $a, b \in R$ such that $a \cdot b \in I$ with $a \notin \sqrt{0_R}$. We claim that $b \in I$. Since  $(a \cdot b) = (a) \cdot (b) \subseteq I$, we have $a' \cdot b' \leq I$ in $L$, where $a' = (a),\  b' = (b) \in L_*$.  Clearly, $a' \nleq \sqrt{0_L}$. Hence $b' \leq I$, i.e., $b \in I$. Thus $I$ is an $n$-ideal of $R$.  \end{proof}

\begin{thm} \label{7.36} Let $R$ be a commutative ring with unity. Then $I$ is a $J$-ideal of $R$ if and only if $I$ is a $J$-element of multiplicative lattice $L = Id(R)$, where $Id(R)$ is  the ideal lattice of $R$. \end{thm}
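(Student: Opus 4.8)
The plan is to follow verbatim the strategy of the proof of Theorem~\ref{2.36}, replacing the nilradical by the Jacobson radical and invoking Theorem~\ref{7.11} in place of Theorem~\ref{6.11}. The first thing I would establish is the identification $J(L)=J(R)$: the maximal elements of $L=Id(R)$ are precisely the maximal ideals of $R$, and since the meet in the ideal lattice is intersection, $J(L)=\bigwedge\{m \mid m\in \text{Max}(L)\}=\bigcap\{\mathfrak{m}\mid \mathfrak{m}\ \text{a maximal ideal of } R\}=J(R)$. Consequently $\bigl(J(L)\bigr]$ consists exactly of those ideals contained in the Jacobson radical $J(R)$, so for a compact (finitely generated) ideal $J$ the condition $J\nleq J(L)$ (equivalently $J\notin\bigl(J(L)\bigr]$) is the same as $J\nsubseteq J(R)$.

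For the forward direction, suppose $I$ is a $J$-ideal of $R$. I would take compact elements $J,K\in L_*$, i.e.\ finitely generated ideals of $R$, with $J\cdot K\leq I$ and $J\nleq J(L)$. By the identification above this says $J\nsubseteq J(R)$, and since $JK\subseteq I$, Theorem~\ref{7.11}(3) gives $K\subseteq I$, that is, $K\leq I$. Hence $I$ is a $J$-element of $L$.

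For the converse, suppose $I$ is a $J$-element. Given $a,b\in R$ with $ab\in I$ and $a\notin J(R)$, I would pass to the principal ideals $a'=(a)$ and $b'=(b)$, which are compact elements of $L$, and note that $(ab)=(a)(b)\subseteq I$ yields $a'\cdot b'\leq I$. Since $a\notin J(R)=J(L)$ we have $a'\nleq J(L)$, so the defining property of a $J$-element gives $b'\leq I$, i.e.\ $b\in I$. Thus $I$ is a $J$-ideal of $R$.

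The only genuinely substantive step is the identification $J(L)=J(R)$; once that is in place the argument is the same routine transfer between a ring and its ideal lattice already used in Theorems~\ref{8.36} and~\ref{2.36}. I would also point out that, in contrast to the $r$-ideal case of Theorem~\ref{8.36}, no Noetherian or annihilator hypothesis is required here, because Theorem~\ref{7.11} already phrases the $J$-ideal condition directly in terms of products of ideals, so there is no obstruction analogous to the one handled there via the Prime Avoidance Theorem.
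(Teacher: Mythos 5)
Your proposal is correct and follows essentially the same route as the paper's own proof: both directions are the same transfer argument via Theorem~\ref{7.11}, with principal ideals as compact elements in the converse. The only difference is that you make explicit the identification $J(L)=J(R)$, which the paper uses implicitly; this is a harmless (indeed helpful) addition.
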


\begin{proof}		
	
	Suppose that $I$ is a $J$-ideal of $R$. Let $A, B$ be finitely generated ideals of $R$ (which  are compact elements of $Id(R)$) such that $A \cdot B \leq I$ with $A \nleq J(L)$ in $L=Id(R)$. Since $A \nleq J(L)$, we get $A \nsubseteq J(R)$. By Theorem \ref{7.11.}, $B \subseteq I$, i.e., $B \leq I$ in $L$. Hence $I$ is a $J$-element of $L$.

	Conversely, suppose that  $I$ is a $J$-element of $L$. Let $a, b \in R$ such that $a \cdot b \in I$ with $a \notin J(R)$. We claim that $b \in I$. Since  $(a \cdot b) = (a) \cdot (b) \subseteq I$, we have $a' \cdot b' \leq I$ in $L$, where $a' = (a), \ b' = (b) \in L_*$.  Clearly, $a' \nleq J(L)$. Hence $b' \leq I$, i.e.,  $b \in I$. Hence $I$ is a $J$-ideal of $R$.  \end{proof}

Let $L$ be a multiplicative lattice. Then one can see the each of the sets $Z(L)$, $\bigl(\sqrt{0}\ \!\bigr]$ and $\bigl(J(L)\!\bigr]$ are multiplicatively  $M$-closed subsets of $L$. So if we replace $\mathfrak{X}$ by these sets, then we get the results of $r$-element, $n$-element and $J$-element respectively. 

\vskip5pt

We quote some of these results for ready reference.

\vskip 5truept 

One can see that in a $c$-lattice $L$,  $\bigl(L_* \setminus Z(L)\bigr)\subseteq \bigl(L_* \setminus \bigl(\sqrt{0}\ \!\bigr]\bigr)$ and  $\bigl(\sqrt{0}\ \!\bigr]  \subseteq \bigl(J(L)\!\bigr]$. For this, let $x \in \bigl(L_* \setminus Z(L)\bigr)$ and $x \in \bigl(\sqrt{0}\ \!\bigr]$. Then $x^n=0$ for some $n \in \mathbb{N}$. Thus $x \in Z(L)$, a contradiction. This proves the inclusion $\bigl(L_* \setminus Z(L)\bigr)\subseteq \bigl(L_* \setminus \bigl(\sqrt{0}\ \!\bigr]\bigr)$. Now, for the second inclusion, let $y$ be any compact element such that $y \in \bigl(\sqrt{0}\ \!\bigr] $. Then $y ^k=0$ for some $k \in 
\mathbb{N}$. Let $m$ be a maximal element of $L$. Then it is prime. This together with $y^k=0 \leq m$ implies that $y \leq m$. This further yields that $y \in J(L)=\bigwedge_{k \in \Lambda} m_k$. Since $L$ is a $c$-lattice and every compact element below $\sqrt{0}$ is below $J(L)$, we have $\sqrt{0} \leq J(L)$.

\vskip 5truept 

Hence by Lemma \ref{imply},  we have the following result.

\begin{prop}\label{relation}
	Let $L$ be a $c$-lattice.  Then every $n$-element of $L$   is a $r$-element as well as it is a $J$-element of $L$.
\end{prop}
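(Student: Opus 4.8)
The plan is to deduce both implications directly from the two set inclusions established immediately before the statement, namely $\bigl(L_* \setminus Z(L)\bigr) \subseteq \bigl(L_* \setminus \bigl(\sqrt{0}\ \!\bigr]\bigr)$ and $\bigl(\sqrt{0}\ \!\bigr] \subseteq \bigl(J(L)\!\bigr]$, combined with the monotonicity provided by Lemma \ref{imply}. In each case the point is that the defining hypothesis ``$a \notin \mathfrak{X}$'' for an $r$- or $J$-element is at least as strong as the corresponding hypothesis for an $n$-element, so the $n$-element property can be invoked to conclude $b \leq i$.

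First I would treat the implication that every $n$-element is a $J$-element. Since $\bigl(\sqrt{0}\ \!\bigr]$ and $\bigl(J(L)\!\bigr]$ are both $M$-closed subsets of the principal form $(j]$, Lemma \ref{2.8.} identifies the $n$-element condition with being a $\bigl(\sqrt{0}\ \!\bigr]$-element and the $J$-element condition with being a $\bigl(J(L)\!\bigr]$-element. As $\sqrt{0} \leq J(L)$ gives $\bigl(\sqrt{0}\ \!\bigr] \subseteq \bigl(J(L)\!\bigr]$, Lemma \ref{imply} applies verbatim and shows at once that a $\bigl(\sqrt{0}\ \!\bigr]$-element is a $\bigl(J(L)\!\bigr]$-element, i.e.\ every $n$-element is a $J$-element.

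For the implication that every $n$-element is an $r$-element I would argue directly on compact elements, since $Z(L)$ need not be a principal ideal. Let $i$ be an $n$-element and suppose $a \cdot b \leq i$ with $a, b \in L_*$ and $a \notin Z(L)$. The inclusion $\bigl(L_* \setminus Z(L)\bigr) \subseteq \bigl(L_* \setminus \bigl(\sqrt{0}\ \!\bigr]\bigr)$ forces $a \notin \bigl(\sqrt{0}\ \!\bigr]$; applying the defining property of the $n$-element $i$ to $a \cdot b \leq i$ then yields $b \leq i$, so $i$ is an $r$-element.

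The only point requiring care, and the reason the $r$-case is not a verbatim application of Lemma \ref{imply}, is that $Z(L)$ is in general not of the form $(j]$, so the full-lattice inclusion $\bigl(\sqrt{0}\ \!\bigr] \subseteq Z(L)$ can fail (the element $\sqrt{0}$ itself need not be a zero-divisor) even though the compact-level inclusion $\bigl(\sqrt{0}\ \!\bigr] \cap L_* \subseteq Z(L)$ always holds, as every compact element below $\sqrt{0}$ is nilpotent and hence lies in $Z(L)$. This is why the $r$-case is settled by the direct compact-element computation above, whereas the $J$-case, living entirely among principal $M$-closed subsets, follows formally from Lemmas \ref{2.8.} and \ref{imply}.
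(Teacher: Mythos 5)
Your proof is correct and follows essentially the same route as the paper, which deduces both implications from the two inclusions $\bigl(L_* \setminus Z(L)\bigr) \subseteq \bigl(L_* \setminus \bigl(\sqrt{0}\ \!\bigr]\bigr)$ and $\bigl(\sqrt{0}\ \!\bigr] \subseteq \bigl(J(L)\bigr]$ together with a one-line appeal to Lemma \ref{imply}. Your additional care in the $r$-case --- arguing directly on compact elements because $Z(L)$ is not a principal ideal, so Lemma \ref{imply} does not apply verbatim --- is a legitimate sharpening of the paper's terse argument, and since the definitions of $r$-, $n$- and $J$-elements all quantify over $L_*$, the compact-level inclusion you use is exactly what is needed.
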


By Proposition \ref{relation}, Theorems \ref{8.36}, \ref{2.36}, and Theorem \ref{7.36}, we have:

\begin{prop}
	Let $R$ be a commutative ring with unity. Then every $n$-ideal of $R$   is a $r$-ideal as well as it is a $J$-ideal.
\end{prop}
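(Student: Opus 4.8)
The plan is to transfer the purely lattice-theoretic implication of Proposition \ref{relation} back to the ring $R$ by means of the dictionary established in Theorems \ref{8.36}, \ref{2.36}, and \ref{7.36}. First I would fix an $n$-ideal $I$ of $R$ and invoke Theorem \ref{2.36} (which holds for every commutative ring with unity) to conclude that $I$ is an $n$-element of the multiplicative lattice $L = Id(R)$. Here it is essential to recall that the ideal lattice of a commutative ring with unity is always a $c$-lattice, so the hypotheses of Proposition \ref{relation} are met.

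Next I would apply Proposition \ref{relation} to the $c$-lattice $L = Id(R)$ to obtain that the $n$-element $I$ is simultaneously an $r$-element and a $J$-element of $L$. This is the substantive step, and it rests on the two inclusions $\bigl(L_* \setminus Z(L)\bigr)\subseteq \bigl(L_* \setminus \bigl(\sqrt{0}\ \!\bigr]\bigr)$ and $\bigl(\sqrt{0}\ \!\bigr]\subseteq \bigl(J(L)\!\bigr]$ verified just before Proposition \ref{relation}, combined with the monotonicity of the $\mathfrak{X}$-element property in the parameter $\mathfrak{X}$ furnished by Lemma \ref{imply}.

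Finally, I would translate the conclusion back into ring language. Since $I$ is a $J$-element of $Id(R)$, Theorem \ref{7.36}, which is again valid for an arbitrary commutative ring with unity, yields that $I$ is a $J$-ideal of $R$. For the $r$-ideal assertion, the only direction I need is that an $r$-element of $Id(R)$ is an $r$-ideal of $R$; as recorded in the remark following Theorem \ref{8.36}, this direction holds for every commutative ring with unity, the Noetherian hypothesis being required solely for the converse. Hence $I$ is an $r$-ideal of $R$ as well, completing the proof.

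The main obstacle I anticipate is not computational but a matter of careful bookkeeping: I must be sure to use Theorem \ref{8.36} only in the direction ``$r$-element $\Rightarrow$ $r$-ideal,'' so that the Noetherian hypothesis appearing in its statement is not inadvertently imported into the present proposition, which is asserted for \emph{all} commutative rings with unity.
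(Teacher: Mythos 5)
Your proof is correct and follows essentially the same route as the paper, which derives the proposition from Proposition \ref{relation} together with Theorems \ref{8.36}, \ref{2.36}, and \ref{7.36}. In fact you are slightly more careful than the paper: you explicitly note that only the direction ``$r$-element $\Rightarrow$ $r$-ideal'' of Theorem \ref{8.36} is needed, so the Noetherian hypothesis in that theorem does not restrict the present statement.
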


From Lemma \ref{4.10.}, we get Proposition 2.2 of \cite{KB} and Proposition 2.3 of \cite{TKO}.   Also,  Proposition 2.4 of \cite{TKO} follows from Lemma \ref{4.12.}. It is easy to observe that Proposition 2.10 of \cite{KB} and Theorem 2.7 of \cite{TKO} follows from Lemma \ref{4.13.}. We observe that Proposition 2.13 of \cite{KB} follows from Lemmas \ref{4.10.} and \ref{4.18.}. Note that Theorem \ref{2.20.} strengthens Theorem 2.12 of \cite{TKO}. Lemmas \ref{2.22.} and \ref{2.23.} generalizes the equivalence of $(i)$ and $(ii)$ in  Corollary 2.13 of \cite{TKO} and Proposition 2.20 of \cite{KB} respectively.  One can see that Lemma \ref{cancellation} extends Proposition 2.16 of \cite{TKO} and Proposition 2.21  of \cite{KB}.   Lastly, Theorem 2.23 of \cite{TKO} and Proposition 2.29 of \cite{KB} follows from Theorem \ref{2.30.}.

\vskip 5truept 
For the following result, we need a little more explanation.

\begin{prop}[{\cite[Proposition 2.3]{KB}}]
	Let $R$ be a commutative ring with unity. Then the following are equivalent.	
\end{prop}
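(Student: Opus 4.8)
The plan is to read this off from the ideal-lattice picture rather than argue inside $R$ directly. Working in $L = Id(R)$, which is a $c$-lattice by Definition \ref{1.1.}, I would identify the two conditions of the statement as (i) every proper ideal of $R$ is a $J$-ideal, and (ii) $R$ is local with maximal ideal $J(R)$, and prove (i) $\Leftrightarrow$ (ii) by combining the local-lattice results of Lemmas \ref{local} and \ref{meet-irr} with the $J$-ideal/$J$-element dictionary of Theorem \ref{7.36}. The extra explanation alluded to before the statement is only a matching of notation: a $J$-element is exactly an $\mathfrak{X}$-element for the fixed set $\mathfrak{X} = \bigl(J(L)\bigr]$, and since $J(L)$ is itself an element of $L$ this set has the form $(m]$ with $m = J(L)$, so Lemmas \ref{local} and \ref{meet-irr} become applicable once we note $J(L) \neq 1_L$.

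For (ii) $\Rightarrow$ (i), I would write $M$ for the unique maximal ideal, so that $M$ is the unique maximal element of $L$ and $J(L) = \bigwedge \text{Max}(L) = M$. Then $\mathfrak{X} = \bigl(J(L)\bigr] = (M]$ and $(L; M)$ is a local lattice, so Lemma \ref{local} gives that every proper element of $L$ is an $\mathfrak{X}$-element, i.e.\ a $J$-element. Passing back through Theorem \ref{7.36}, every proper ideal of $R$ is a $J$-ideal.

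For (i) $\Rightarrow$ (ii), I would start from the assumption that every proper ideal of $R$ is a $J$-ideal; by Theorem \ref{7.36} this says every proper element of $L$ is a $J$-element, that is, an $\mathfrak{X}$-element for $\mathfrak{X} = \bigl(J(L)\bigr] = (m]$ with $m = J(L)$. Here $m = J(L) \in L \setminus \{1\}$ because $R$ possesses a maximal ideal and hence $J(R) \neq R$. Lemma \ref{meet-irr} then yields that $J(L)$ is the unique maximal element of $L$, which is precisely the statement that $J(R)$ is the unique maximal ideal of $R$; thus $R$ is local. The same computation, via Lemma \ref{4.10.}, also delivers any auxiliary equivalent condition of the form ``every proper ideal is contained in $J(R)$'' that the statement may list.

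I expect no serious obstacle; the only care needed is the bookkeeping between the two descriptions of $\mathfrak{X}$ and verifying the hypotheses of the quoted lemmas --- that $Id(R)$ is a $c$-lattice, that $J(L) \neq 1_L$, and that Theorem \ref{7.36} is invoked uniformly over all proper ideals and elements rather than for a single fixed one. Once these are checked, both implications are immediate from Lemmas \ref{local}, \ref{meet-irr} and Theorem \ref{7.36}, with no remaining computation.
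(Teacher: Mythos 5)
Your proof is correct and follows essentially the same route as the paper: Lemma \ref{local} plus Theorem \ref{7.36} for the implication from locality to every proper ideal being a $J$-ideal, and Lemma \ref{meet-irr} (with the identification $\mathfrak{X}=\bigl(J(L)\bigr]$ and $J(L)\neq 1_L$) for the converse. The only difference is that you spell out the bookkeeping the paper leaves implicit, which is harmless.
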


\begin{enumerate}
	\item $R$ is a local ring.
	\item Every proper ideal of $R$ is a $J$-ideal.
	
\end{enumerate}

\begin{proof}
	$(1) \implies (2):$ It is clear that the ideal lattice $Id(R)$ of $R$ is a local lattice. Further, $J(L)=m$, where $m$ is the unique maximal element of $Id(R)$. Hence by Lemma \ref{local}, every proper element of $L$ is an $\mathfrak{X}$-element, where $\mathfrak{X}=(m]=\bigl(J(L)\!\bigr]$. That is, every proper element of $L$ is a $J$-element. By Theorem \ref{7.36}, every proper ideal of $R$ is a $J$-ideal.

	
	$(2) \implies (1):$ follows from Lemma \ref{meet-irr}
\end{proof}

Finally, the results of $n$-multiplicatively closed subset and $J$-multiplicatively closed subset can be obtained by using Lemma \ref{4.29.}. Further, the results of $r$-ideals can be deduced from our results for Noetherian rings, since Theorem \ref{8.36} is available for Noetherian lattice settings. If Question \ref{que} has an affirmative answer, then our results will extend most of the results of $r$-ideals of a commutative ring with unity.

\end{document}